\newtheorem{dfn}{Definition}[section]
\newtheorem{lem}[dfn]{Lemma}
\newtheorem{thm}[dfn]{Theorem}
\newtheorem{rem}[dfn]{Remark}
\newcommand{\bff}{{\bold f}}
\newcommand{\bfg}{{\bold g}}
\newcommand{\DV}{{\rm Div}\,}
\newcommand{\dv}{{\rm div}\,}
\newcommand{\BR}{{\Bbb R}}
\newcommand{\BC}{{\Bbb C}}
\newcommand{\tg}{\tilde{g}}
\newcommand{\tf}{\tilde{f}}
\newcommand{\tF}{\tilde{F}}
\newcommand{\tG}{\tilde{G}}
\renewcommand{\th}{\tilde{h}}
\newcommand{\tH}{\tilde{H}}
\newcommand{\tA}{\tilde{A}}
\newcommand{\td}{\tilde{d}}
\newcommand{\tD}{\tilde{D}}
\newcommand{\hu}{\hat{u}}
\newcommand{\hv}{\hat{v}}
\newcommand{\hg}{\hat{g}}
\newcommand{\hh}{\hat{h}}
\newcommand{\heta}{\hat{\eta}}
\newcommand{\htheta}{\hat{\theta}}
\newcommand{\hW}{\hat{W}}
\newcommand{\CF}{{\mathcal F}}
\newcommand{\CL}{{\mathcal L}}
\newcommand{\CM}{{\mathcal M}}
\newcommand{\CR}{{\mathcal R}}
\newcommand{\CH}{{\mathcal H}}
\newcommand{\pd}{\partial}
\newcommand{\uHS}{{\BR^n_+}}
\newcommand{\lHS}{{\BR^n_-}}
\newcommand{\bHS}{{\BR^n_0}}
\newcommand{\dBR}{{\dot{\BR}}}
\newcommand{\eps}{\varepsilon}
\newcommand{\loc}{\mathrm{loc}}
\renewcommand\Re{\operatorname{Re}}
\newcommand{\jump}[1]{\ensuremath{[\![#1]\!]}}
\newcommand{\jjump}[1]{\ensuremath{[\![\![#1]\!]\!]}}
\begin{document}

\title{Solution formula for generalized two-phase Stokes equations and its applications to maximal regularity; model problems}

\author{Naoto Kajiwara \footnote{Applied Physics Course, Department of Electrical, Electronic and Computer Engineering, Gifu University, Yanagido 1-1, Gifu, Gifu 501-1193, JAPAN. E-mail: kajiwara@gifu-u.ac.jp}}

\date{}

\maketitle


\begin{abstract}
In this paper we give a solution formula for the two-phase Stokes equations with and without surface tension and gravity in the whole space with flat interface. 
The solution formula has already considered by Shibata-Shimizu. 
However we reconstruct the formula so that we are able to prove resolvent estimate and maximal regularity estimate. 
In the previous work, they needed to assume additional conditions on normal components. 
We also take care of normal components, while the assumption becomes weaker than before. 
The method is based on an $H^\infty$-calculus which has already used for the Stokes problems with various boundary conditions in the half space. 
\end{abstract}


\begin{flushleft}
\textbf{Keywords} : solution formula, resolvent estimate, maximal regularity, two-phase Stokes equations. 
\end{flushleft}




\section{Introduction}

It is known that the motion of viscous incompressible fluids are governed by the Navier--Stokes equations. 
When we consider two fluids are separated by a free surface, the equations become a challenging free boundary problem. 
Mathematically, the problem is formulated by initial boundary value problems. 
Let $\Omega_+(t)$ and $\Omega_-(t)$ be domains in $\BR^n$ occupied with each fluids, and let they have the same time-independent boundary $\Gamma(t)=\partial\Omega_+(t)(=\partial\Omega_-(t))$ and $\BR^n=\Omega_+(t)\cup\Omega_-(t)\cup\Gamma(t)$. 
The unknowns are the boundary $\Gamma(t)$ as well as the velocity $v(x,t)={}^t(v_1, \ldots, v_n)$ and pressure $\theta(x,t)$ defined on $\Omega(t)=\Omega_+(t)\cup\Omega_-(t)$. 
The equations are known as follows; 
\begin{equation}
\left\{\begin{aligned}
\rho(\pd_t v + (v\cdot \nabla)v) -  \DV S(v,\theta)=0 &\qquad{\rm in}~\Omega(t),~t>0, \\
\dv u = 0 &\qquad {\rm in}~\Omega(t),~t>0, \\
\jump{S(v,\theta)\nu_t}=c_\sigma\CH \nu_t + \jump{\rho}c_g x_n \nu_t &\qquad {\rm on}~\Gamma(t),~t>0, \\
\jump{v}=0 &\qquad {\rm on}~\Gamma(t),~t>0, \\
V=v\cdot \nu_t&\qquad {\rm on}~\Gamma(t),~t>0, \\
v|_{t=0}=v_0&\qquad {\rm in}~\Omega(0). \\
\end{aligned}\right.
\end{equation}
Here, $S(v,\theta)=\mu D(v)-\theta I=(\mu(\pd_i v_j + \pd_j v_i) - \delta_{ij}\theta)_{ij}$ is $n$ times $n$ symmetric stress tensor, $V$ is the normal velocity of $\Gamma(t)$, $\nu_t$ is the unit outward normal vector pointing from $\Omega_+(t)$ to $\Omega_-(t)$ and $\CH$ is the mean curvature of $\Gamma(t)$. 
The letters $\rho, \mu, c_\sigma$ and $c_g$ denote the coefficients of density, viscosity, surface tension and gravity, respectively, where $\rho$ and $\mu$ are constants on each domain $\Omega_{\pm}(t)$.  
The symbol $\jump{\cdot}$ denotes jump across the interface $\Gamma(t)$. 
For example, for piecewise constant density $\rho$ defined on $\Omega(t)$, the quantity $\jump{\rho}$ means $\jump{\rho}=\rho|_{\Omega_+(t)} - \rho|_{\Omega_-(t)}$. 

It is known that Hanzawa transform is a useful method to solve free boundary problems. 
In this method, the unknown $\Gamma(t)$ is given by a height function defined on the boundary of a fixed domain. 
After using this transformation, the equations become quasi-linear equations. 
Therefore it is important to consider their linearized equations. 
In addition to above discussion, maximal regularity for the linearized equations on the whole space with flat interface is a key; 
\begin{equation}\label{Swith}
\left\{\begin{aligned}
\rho\pd_t U -  \DV S(U,\Theta)=F &\qquad{\rm in}~\dBR^n(:=\uHS\cup\lHS), t>0,   \\
\dv U = F_d &\qquad {\rm in}~\dBR^n, t>0, \\
\pd_t Y +  U_n = D &\qquad{\rm on}~\BR^n_0(:=\pd\uHS), t>0, \\
\jump{S(U,\Theta)\nu}-(\jump{\rho}c_g + c_\sigma \Delta')Y\nu=\jump{G} &\qquad{\rm on}~\bHS, t>0, \\
\jump{U}=\jump{H}&\qquad{\rm on}~\BR^n_0, t>0, \\
(U,Y)|_{t=0} = (0,0)&\qquad{\rm in}~\dBR^n
\end{aligned}\right.
\end{equation}
where $F, F_d, D, G$ and $H$ are external forces, and $\nu=(0,\ldots, 0,-1)$. 
Moreover, we consider the corresponding resolvent equations and the case $c_\sigma=c_g=0$; 
\begin{equation}\label{RSwith}
\left\{\begin{aligned}
\rho\lambda u -  \DV S(u,\theta)=F &\qquad{\rm in}~\dBR^n,   \\
\dv u = f_d &\qquad {\rm in}~\dBR^n, \\
\lambda \eta + u_n = d &\qquad{\rm on}~\BR^n_0, \\
\jump{S(u,\theta)\nu}-(\jump{\rho}c_g + c_\sigma \Delta')\eta\nu=\jump{g} &\qquad{\rm on}~\bHS, \\
\jump{u}=\jump{h}&\qquad{\rm on}~\BR^n_0, 
\end{aligned}\right.
\end{equation}
\begin{equation}\label{Swithout}
\left\{\begin{aligned}
\rho\pd_t U -  \DV S(U,\Theta)=F &\qquad{\rm in}~\dBR^n, t>0,   \\
\dv U = F_d &\qquad {\rm in}~\dBR^n, t>0, \\
\jump{S(U,\Theta)\nu}=\jump{G} &\qquad{\rm on}~\bHS, t>0, \\
\jump{U}=\jump{H}&\qquad{\rm on}~\BR^n_0, t>0, \\
U|_{t=0} = 0&\qquad{\rm in}~\BR^{n-1}, 
\end{aligned}\right.
\end{equation}
\begin{equation}\label{RSwithout}
\left\{\begin{aligned}
\rho\lambda u -  \DV S(u,\theta)= f &\qquad{\rm in}~\dBR^n,   \\
\dv u = f_d &\qquad {\rm in}~\dBR^n, \\
\jump{S(u,\theta)\nu}=\jump{g} &\qquad{\rm on}~\bHS, \\
\jump{u}=\jump{h}&\qquad{\rm on}~\BR^n_0. 
\end{aligned}\right.
\end{equation}
In this paper we construct the solution formulas of these four problems. 
The approach is based on the standard way, which means partial Fourier transforms and Laplace transforms of the equations. 
When we solve ordinary differential equations, we need to take care of a matrix. 
In the previous work \cite{SS11}, they also gave the solution formulas by analyzing the ordinary differential equations and the matrix.
However, our approach will be easier than before. 
We focus only on the determinant of the matrix and the order of growth of the cofactor matrix. 
Then we are able to get the solution formulas clearly. 
This is one of our main theorems. 
As its application, we are able to prove resolvent estimate and maximal regularity estimate. 
When we can get solution formulas with a suitable form, we know that they have these estimates. 
This strategy has shown in the paper \cite{K22}, which considered the Stokes equations with various boundary conditions in the half space. 
We remark that the paper \cite{SS11} had to assume additional conditions for $h_n$ and $H_n$. 
On the other hand, we can relax some conditions. 
The quantity of the calculation is much less than before too. 

There are several papers on the two-phase free boundary problems. 
The problems can be divided into two cases; one is a compact free surface and the other is a non-compact one. 
We review only for the first case. 
Tanaka \cite{T93} proved the global existence theorem in $L^2$ Sobolev-Slobodetskii space. 
Denisova proved the same results with $c_\sigma=0$ in both H\"older space \cite{D07} and $L^2$-based Sobolev space \cite{D14}. 
Denisova et al.~extended their results for capillarity fluids, i.e. $c_\sigma>0$ in both whole space \cite{D94} and bounded domain \cite{DS11}. 
Shimizu \cite{S08} treated the case that $c_\sigma=0$ in $L^p$-$L^q$ settings. 
K\"ohne, Pr\"uss and Wilke proved global well-posedness for the capillarity fluids in $L^p$-settings and their asymptotic behaviour in \cite{KPW13}. 
Saito and Shibata \cite{SS20} considered comprehensive approach for two-phase problems. 
Moreover there are some papers for two-phase problems e.g. varifold solution \cite{A07} and viscosity solution \cite{GT94, T95}. 
Concerning resolvent estimates and maximal regularity, see also \cite{K22, KS12, PS16, SS03, SS08, SS09, SS11, SS12}. 

This paper is organized as follows. 
First we introduce some notations and state our main theorems in section \ref{main}. 
The main point of this paper is shorten the proof of estimates, and weaken the assumption on the normal components compared with the previous work \cite{SS11}. 
In section \ref{reduction}, we cite some theorems from the paper \cite{SS11}, which is one of the standard way to consider solution formulas. 
This implies that it is enough to consider the cases $f=f_d=0$ and $F=F_d=0$. 
The solution formula from the boundary data is the most important part. 
This is done in section \ref{formula} for the equations \eqref{Swithout} and \eqref{RSwithout}. 
And then, in section \ref{proof}, we prove resolvent $L_p$ estimate and maximal $L_p$-$L_q$ estimate, which is based on the theorem in \cite{K22}. 
Analysis of the equations \eqref{Swith} and \eqref{RSwith} is given in section \ref{final}. 
The solution formulas and the estimates depend on the results on the equations \eqref{Swithout} and \eqref{RSwithout}

\section{Main theorem}\label{main}
In this section we prepare some notations and function spaces and we give the main theorems. 
Let $\uHS$, $\lHS$ and $\bHS$ be the upper and lower half-space and its flat boundary and let $Q_+$, $Q_-$ and $Q_0^n$ be the corresponding time-space domains; 
\begin{align*}
&\uHS:=\{x=(x_1, \ldots, x_n)\in\BR^n\mid x_n>0\}, \quad \lHS:=\{x=(x_1, \ldots, x_n)\in\BR^n\mid x_n<0\}, \\
&\bHS:=\{x=(x',0)=(x_1, \ldots, x_{n-1},0)\in\BR^n\},\\
&Q_+:=\uHS\times (0,\infty), \quad Q_-:=\lHS\times (0,\infty), \quad Q_0:=\bHS\times (0,\infty). 
\end{align*}

Given a domain $D$, Lebesgue and Sobolev spaces are denoted by $L_q(D)$ and $W^m_q(D)$ with norms $\|\cdot\|_{L_q(D)}$ and $\|\cdot\|_{W^m_q(D)}$. 
Same manner is applied in the $X$-valued spaces $L_p(\BR, X)$ and $W^m_p(\BR, X)$. 
For a scalar function $f$ and $n$-vector $\bff=(f_1, \ldots, f_n)$, we use the following symbols; 
\begin{alignat*}{3}
&\nabla f = (\pd_1 f, \ldots, \pd_n f), &\quad &\nabla^2 f = (\pd_i\pd_j f \mid i,j=1,\ldots, n), \\
&\nabla \bff = (\pd_i f_j \mid i,j=1, \ldots, n), &\quad &\nabla^2 \bff = (\pd_i \pd_j f_k \mid i,j,k= 1, \ldots, n). 
\end{alignat*}
Even though $\bfg=(g_1, \ldots, g_{\tilde{n}}) \in X^{\tilde{n}}$ for some $\tilde{n}$, we use the notations $\bfg\in X$ and $\|\bfg\|_X$ as $\sum_{j=1}^{\tilde{n}}\|g_j\|_X$ for simplicity. 
Set 
\begin{align*}
\hW^1_q(D)= \{\pi\in L_{q,\loc}(D)\mid\nabla \pi\in L_q(D)\}, \quad \hW^1_{q,0}(D)=\{\pi\in \hW^1_q(D)\mid \pi|_{\pd D}=0\}
\end{align*}
and let $\hW^{-1}_q(D)$  denote the dual space of $\hW^1_{q',0}(D)$, where $1/q+1/q'=1$. 
For $\pi\in \hW^{-1}_q (D)\cap L_q(D)$, we have 
\[\|\pi\|_{\hW^{-1}_q(D)} = \sup\left\{ \left|\int_D \pi \phi dx\right| \mid \phi \in \hW^1_{q',0}(D), \|\nabla \phi\|_{L_{q'}(D)}=1\right\}. \]
Although we consider time interval $\BR_+$, we regard functions on $\BR$ to use Fourier transform. 
To do so and to consider Laplace transforms as Fourier transforms, we introduce some function spaces; 
\begin{align*}
L_{p,0,\gamma_0}(\BR, X)&:=\{f:\BR\to X \mid e^{-\gamma_0 t}f(t)\in L_p(\BR, X),~f(t)=0~\text{for}~t<0\}, \\
W^m_{p,0,\gamma_0}(\BR, X)&:=\{f\in L_{p,0,\gamma_0}(\BR, X) \mid e^{-\gamma_0 t}\pd_t^j f(t) \in L_p(\BR, X),~j=1,\ldots, m\}, \\
L_{p,0}(\BR, X)&:=L_{p,0,0}(\BR; X),\quad W^m_{p,0}(\BR, X):=W^m_{p,0,0}(\BR; X)
\end{align*}
for some $\gamma_0\ge0$. 
Let $\CF$ and $\CF^{-1}$ denote the Fourier transform and its inverse, defined as 
\begin{align*}
\CF[f](\xi) =\CF_x [f](\xi)   = \int_{\BR^n} e^{-i x\cdot \xi} f(x) dx, \quad \CF^{-1}[g](x)= \CF^{-1}_\xi[g](x)  = \frac{1}{(2\pi)^n}\int_{\BR^n} e^{ix\cdot \xi} g(\xi) d\xi. 
\end{align*}
Similarly, let $\CL$ and $\CL^{-1}_\lambda$ denote two-sided Laplace transform and its inverse, defined as 
\begin{align*}
\CL[f](\lambda) = \int_{-\infty}^\infty e^{-\lambda t} f(t) dt, \quad \CL^{-1}_\lambda[g](t) = \frac{1}{2\pi}\int_{-\infty}^\infty e^{\lambda t} g(\lambda) d\tau, 
\end{align*}
where $\lambda = \gamma + i\tau \in \BC$. 
Given $s\ge 0$ and $X$-valued function $f$, we use the following Bessel potential spaces to treat fractional orders; 
\begin{align*}
H^s_{p,0,\gamma_0}(\BR, X) &:= \{f:\BR\to X \mid \Lambda^s_\gamma f := \CL^{-1}_\lambda[|\lambda|^s \CL[f](\lambda)](t) \in L_{p, 0, \gamma}(\BR, X)~\text{for~any}~\gamma\ge \gamma_0\}, \\
H^s_{p,0}(\BR, X)&:=H^s_{p,0,0}(\BR, X). 
\end{align*}
Since we need to take care of the $n$-th component of the velocity, we introduce the following function spaces; 
\begin{align*}
E_q(\dBR^n)&:=\{h_n \in W^2_q(\dBR^n)\mid |\nabla'|^{-1} \pd_n h_n := \CF_{\xi'}^{-1} |\xi'|^{-1} \CF_{x'} (\pd_n h_n)(x', x_n) \in L_q(\dBR^n)\}. 
\end{align*}
We remark that this condition is weaker than the paper \cite{SS11} since they assumed $|\nabla'|^{-1} h_n \in \hW^1_q(\dBR^n)$. 
It will be easier to use this assumption to handle a difficult term.   
Let $\Sigma_{\eps, \gamma}:=\{\lambda\in\BC\setminus\{0\}\mid |\arg\lambda|<\pi-\eps, |\lambda|\ge\gamma\}$ and $\Sigma_{\eps}:=\Sigma_{\eps, 0}$. 
Throughout this paper, let $\rho, \mu$ be positive constants on each domain $\BR^n_{\pm}$, denoted by $\rho_{\pm}$ and $\mu_{\pm}$. 
We are ready to state our main results. 

\begin{thm}\label{resolventthm}
Let $0<\eps<\pi/2$ and $1<q<\infty$. 
Then for any $\lambda \in \Sigma_\eps$, 
\[ f\in L_q(\dBR^n),\quad f_d\in \hW^{-1}_q(\BR^n)\cap W^1_q(\dBR^n), \quad g\in W^1_q(\dBR^n), \quad h \in W^2_q(\dBR^n), \quad h_n\in E_q(\dBR^n)\]
problem \eqref{RSwithout} admits a unique solution $(u, \theta) \in W^2_q(\dBR^n) \times \hW^1_q(\dBR^n)$ with the resolvent estimate; 
\begin{align*}
&\|(|\lambda|u, |\lambda|^{1/2}\nabla u, \nabla^2 u, \nabla \theta)\|_{L_q(\dBR^n)} \\
\le 
&C_{n, q, \eps} \left\{\|(f, |\lambda|^{1/2} f_d, \nabla f_d, |\lambda|^{1/2} g, \nabla g, |\lambda|h, \nabla^2 h, |\lambda||\nabla'|^{-1}\pd_n h_n) \|_{L_q(\dBR^n)} + |\lambda| \|f_d\|_{\hW^{-1}_q(\dBR^n)}\right\}. 
\end{align*} 
\end{thm}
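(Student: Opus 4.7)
The plan is to follow the scheme laid out in the introduction: reduce to the case $f=0,f_d=0$ by absorbing the interior data into an auxiliary solution, solve the remaining interface problem explicitly via partial Fourier transform, and read off the resolvent estimate from the Fourier multiplier theorem of \cite{K22}.

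First I would apply the reduction results of \cite{SS11} cited in Section \ref{reduction} to obtain $(u_0,\theta_0)$ satisfying $\rho\lambda u_0-\DV S(u_0,\theta_0)=f$ and $\dv u_0=f_d$ in $\dBR^n$ with the bound
\[
\|(|\lambda|u_0,\,|\lambda|^{1/2}\nabla u_0,\,\nabla^2 u_0,\,\nabla\theta_0)\|_{L_q(\dBR^n)}\le C\bigl(\|(f,\,|\lambda|^{1/2}f_d,\,\nabla f_d)\|_{L_q(\dBR^n)}+|\lambda|\|f_d\|_{\hW^{-1}_q(\dBR^n)}\bigr).
\]
Setting $v=u-u_0$, $\pi=\theta-\theta_0$ reduces \eqref{RSwithout} to a problem of the same form but with $f=f_d=0$ and modified boundary data $\jump{g}-\jump{S(u_0,\theta_0)\nu}$, $\jump{h}-\jump{u_0}$. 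Standard trace estimates keep these in the required spaces and fold all $(f,f_d)$ contributions into the right-hand side of the theorem, leaving only the homogeneous interior case to treat.

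For the reduced problem I apply the partial Fourier transform $\CF_{x'}$. The divergence condition and the momentum equation become a constant-coefficient ODE system in $x_n$ on each half-line; its bounded solutions are spanned by $e^{-A_\pm |x_n|}$ and $e^{-|\xi'|\,|x_n|}$, where $A_\pm=\sqrt{|\xi'|^2+\rho_\pm\lambda/\mu_\pm}$ is the root with positive real part on $\Sigma_\eps$. Substituting into the four transmission conditions at $x_n=0$ yields a linear system $L(\xi',\lambda)\vec c=\hat R(\xi',\lambda)$ whose right-hand side is built from the Fourier transforms of the modified boundary data. Following the streamlined strategy emphasized in the introduction, I would not compute $L^{-1}$ entry by entry, but only establish a pointwise lower bound on $|\det L(\xi',\lambda)|$ on $(\BR^{n-1}\setminus\{0\})\times\Sigma_\eps$ and the growth order of each cofactor, so that each component of $(v,\pi)$ is expressed as a Fourier multiplier of prescribed order, acting on the boundary data and multiplied by one of the exponentials above. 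The resolvent estimate for $|\lambda|u,\,|\lambda|^{1/2}\nabla u,\,\nabla^2 u,\,\nabla\theta$ then follows from the $H^\infty$-calculus and multiplier theorem of \cite{K22}, while uniqueness comes from the invertibility of $L$ on $\Sigma_\eps$ (equivalently, a standard duality argument).

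The principal obstacle is the normal component $h_n$. The cofactor entries acting on $\hat h_n$ carry an additional factor of $|\xi'|$, so that after the weight $|\lambda|$ a naive symbol count asks for one more tangential derivative than $W^2_q$ provides, which is precisely why \cite{SS11} had to impose $|\nabla'|^{-1}h_n\in\hW^1_q(\dBR^n)$. In the solution formula this factor can be rearranged, via the divergence-free structure and an integration by parts in $x_n$, into the combination $|\xi'|^{-1}\pd_n \hat h_n$, which is exactly the quantity controlled by the space $E_q(\dBR^n)$ and enters the estimate as $|\lambda|\,\||\nabla'|^{-1}\pd_n h_n\|_{L_q(\dBR^n)}$. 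Verifying that \emph{every} $h_n$-contribution to the formula can be brought into this form is the delicate point of the argument and is precisely what allows the hypothesis of \cite{SS11} to be relaxed.
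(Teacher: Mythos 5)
Your proposal follows essentially the same route as the paper: reduce to $f=f_d=0$ by the auxiliary constructions cited from \cite{SS11}, pass to the partial Fourier transform to obtain the $4\times4$ system $L\vec c=\hat R$, estimate $|\det L|$ from below and each cofactor by its growth order, and then invoke the $H^\infty$-calculus multiplier theorem of \cite{K22}. Your identification of $h_n$ as the delicate term is also correct; the one small imprecision is that the cure in the paper is a purely algebraic re-partitioning of the weight $\lambda^{1/2}\cdot\lambda^{1/2}=A\cdot(\lambda A^{-1})$ applied to the symbol $\omega_{n,\pm}$, moving the excess $(|\lambda|^{1/2}+\tA)/\tA$ from the multiplier onto the boundary data in the form $\lambda|\nabla'|^{-1}\pd_n h_n$, rather than a further integration by parts in $x_n$.
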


\begin{thm}\label{maxregthm}
Let $1<p, q<\infty$ and $\gamma_0\ge0$. 
Then for any 
\begin{align*}
&F\in L_{p,0,\gamma_0}(\BR, L_q(\dBR^n)),\quad F_d \in W^1_{p,0,\gamma_0}(\BR, \hW^{-1}_q(\BR^n))\cap L_{p,0,\gamma_0}(\BR, W^1_q(\dBR^n)), \\
&G\in H^{1/2}_{p,0,\gamma_0}(\BR, L_q(\dBR^n))\cap L_{p,0,\gamma_0}(\BR, W^1_q(\dBR^n)),  \\
&H\in W^1_{p,0,\gamma_0}(\BR, L_q(\dBR^n)) \cap L_{p,0,\gamma_0}(\BR, W^2_q(\dBR^n)), \quad 
H_n\in W^1_{p,0,\gamma_0}(\BR, E_q(\dBR^n)), 
\end{align*}
problem \eqref{Swithout} admits a unique solution $(U, \Theta)$ such that 
\begin{align*}
U&\in W^1_{p,0,\gamma_0}(\BR, L_q(\dBR^n)) \cap L_{p,0,\gamma_0}(\BR, W^2_q(\dBR^n)), \\
\Theta &\in L_{p,0,\gamma_0}(\BR, \hW^1_q(\dBR^n))
\end{align*} 
with the maximal $L_p$-$L_q$ regularity estimate; 
\begin{align*}
&\|e^{-\gamma t}(\pd_t U, \gamma U, \Lambda^{1/2}_\gamma \nabla U, \nabla^2 U, \nabla \Theta)\|_{L_p(\BR, L_q(\dBR^n))} \\
\le 
&C_{n, p, q, \gamma_0} \left\{\|e^{-\gamma t} (F, \Lambda^{1/2}_\gamma F_d, \nabla F_d, \Lambda^{1/2}_\gamma G, \nabla G, \pd_t H, \nabla^2 H, \pd_t(|\nabla'|^{-1}\pd_n H_n) \|_{L_p(\BR, L_q(\dBR^n))} \right.\\
&\qquad \qquad \left.+ \|e^{-\gamma t} (\pd_t F_d, \gamma F_d)\|_{L_p(\BR, \hW^{-1}_q(\BR^n))} \right\}, 
\end{align*}
for any $\gamma \ge \gamma_0$. 
\end{thm}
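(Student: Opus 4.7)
The plan is to derive the maximal $L_p$-$L_q$ estimate from the resolvent estimate of Theorem \ref{resolventthm} by an operator-valued Fourier multiplier argument, following the strategy of \cite{K22}. First I would apply the reduction theorems of section \ref{reduction} (cited from \cite{SS11}) to the case $F=0$ and $F_d=0$, so that all inhomogeneity is concentrated on the interface data $(G,H,H_n)$. The explicit formula constructed in section \ref{formula} then gives the Laplace-Fourier symbols of $U$ and $\Theta$ as linear combinations of operators acting on $(\CL[G],\CL[H],\CL[H_n])$, built from $|\xi'|$, $\sqrt{\rho_\pm\lambda/\mu_\pm+|\xi'|^2}$ and the determinant of the boundary symbol matrix.

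Next I would realize $(U,\Theta)$ through the inverse Laplace transform on $\lambda=\gamma+i\tau$ with $\gamma\ge\gamma_0$, so that $(U,\Theta)$ appears as a Fourier-Laplace multiplier. Applying an operator-valued Mikhlin-type theorem in the form used in \cite{K22}, the $L_p(\BR;L_q(\dBR^n))$ bound reduces to verifying $\CR$-boundedness on $\Sigma_{\eps,\gamma_0}$ of the relevant symbol families. Each norm on the left-hand side ($\pd_t U$, $\gamma U$, $\Lambda^{1/2}_\gamma\nabla U$, $\nabla^2 U$, $\nabla\Theta$) pairs with a weight $|\lambda|$, $|\lambda|^{1/2}|\xi'|$ or $|\xi'|^2$ tested against the resolvent kernels, and this is precisely what Theorem \ref{resolventthm} controls pointwise in $\lambda$; the $\CR$-bound then follows because scalar symbols holomorphic on $\Sigma_\eps$ and satisfying the Mikhlin derivative estimates uniformly in $\xi'\in\BR^{n-1}\setminus\{0\}$ automatically generate $\CR$-bounded multiplier families. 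Uniqueness of $(U,\Theta)$ in the prescribed class is obtained by taking the Laplace transform of the difference of two solutions and invoking the uniqueness part of Theorem \ref{resolventthm} for a.e.\ $\tau$.

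The main obstacle is the normal component $H_n$. Under the weakened hypothesis $|\nabla'|^{-1}\pd_n H_n\in L_q$ (in place of $|\nabla'|^{-1}H_n\in \hW^1_q$ used in \cite{SS11}), one must track each appearance of $H_n$ in the solution formula and re-express it either through a tangential derivative factor that compensates the $|\xi'|^{-1}$ or through the divergence-free coupling, so that the resulting multipliers match the Mikhlin symbol class with the weaker norm $\|\pd_t(|\nabla'|^{-1}\pd_n H_n)\|_{L_p(\BR,L_q(\dBR^n))}$ on the right. This rearrangement is the technical heart of the argument and is precisely what the function space $E_q(\dBR^n)$ is designed to accommodate. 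The second potentially delicate point, namely the uniform lower bound for the boundary determinant on $\Sigma_\eps\times(\BR^{n-1}\setminus\{0\})$, is already established when proving Theorem \ref{resolventthm} and is simply reused. Once these are handled, passage from $\gamma=\gamma_0$ to arbitrary $\gamma\ge\gamma_0$ is automatic since all multiplier bounds hold uniformly on $\Sigma_{\eps,\gamma_0}$.
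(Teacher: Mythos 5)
Your outline correctly identifies the reduction to $F=F_d=0$, the construction of the Fourier--Laplace solution formula, the regrouping trick for $H_n$ (moving a factor $\lambda^{1/2}$ into the data and a factor $A=|\xi'|$ into the multiplier, so that the bad symbol $\lambda^{1/2}B_\pm^{-2}\omega_{n,\pm}$ acting on $\lambda^{1/2}\pd_n h_n$ is replaced by the good symbol $AB_\pm^{-2}\omega_{n,\pm}$ acting on $\lambda|\nabla'|^{-1}\pd_n h_n$), and uniqueness via the resolvent problem. However, the central step is misidentified. The paper does \emph{not} reduce the $L_p$-$L_q$ bound to verifying $\CR$-boundedness of symbol families --- that is the route of \cite{SS11}, and avoiding it is precisely the point of this paper. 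Instead, Theorem~\ref{thm}(ii) from \cite{K22} is a direct multiplier theorem whose hypotheses are (c) joint holomorphy of $(\tau,\xi')\mapsto m_\lambda(\xi',x_n)$ on a complex sector $\tilde\Sigma_\eta^n$ and (d) the scalar decay bound $|m_\lambda(\xi',x_n)|\le Cx_n^{-1}$ there; no $\CR$-boundedness is ever invoked.

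Related to this, the claim that the needed bounds are ``precisely what Theorem~\ref{resolventthm} controls pointwise in $\lambda$'' and that the $\CR$-bound ``then follows'' from holomorphy is too loose to be a proof. Theorem~\ref{resolventthm} is an $L_q$ estimate for the solution at each fixed $\lambda$, not a bound for the multiplier symbols, and one cannot upgrade it to the $L_p$-in-time estimate without going back to the symbols themselves. In the paper, the resolvent estimate (Theorem~\ref{resolventthm}) and the maximal regularity estimate (Theorem~\ref{maxregthm}) are \emph{parallel} consequences of the same symbol estimate \eqref{symbol}, established once over $\Sigma_\eps\times\tilde\Sigma_\eta^{n-1}$ (with complex $\xi'$) using the cofactor growth bounds and Lemma~\ref{es2}: part (i) of Theorem~\ref{thm} yields the resolvent estimate, part (ii) yields maximal regularity. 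Your argument would become correct if you either (a) commit to the \cite{K22} theorem and verify conditions (c)--(d) directly from \eqref{symbol}, without mentioning $\CR$-boundedness, or (b) commit fully to the $\CR$-boundedness route of \cite{SS11} and carry out the derivative-in-$\lambda$ verification --- but you cannot shortcut (b) by appealing to the resolvent $L_q$ estimate alone.
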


We can extend above theorems to the problem \eqref{Swith} and \eqref{RSwith}. 
Let $c_\sigma>0$ and $c_g>0$. 

\begin{thm}\label{resolventthm2}
Let $0<\eps<\pi/2$ and $1<q<\infty$. 
Then there exists a constant $\gamma_0\ge1$ depending on $\eps>0$ such that for any $\lambda \in \Sigma_{\eps, \gamma_0}$, 
\begin{align*}
&f\in L_q(\dBR^n),\quad f_d\in \hW^{-1}_q(\BR^n)\cap W^1_q(\dBR^n), \quad g\in W^1_q(\dBR^n), \\
&h \in W^2_q(\dBR^n), \quad h_n\in E_q(\dBR^n), \quad d\in W^2_q(\dBR^n)
\end{align*}
problem \eqref{RSwith} admits a unique solution $(u, \theta, \eta) \in W^2_q(\dBR^n) \times \hW^1_q(\dBR^n)\times W^3_q(\dBR^n)$ with the resolvent estimate; 
\begin{align*}
&\|(|\lambda|u, |\lambda|^{1/2}\nabla u, \nabla^2 u, \nabla \theta)\|_{L_q(\dBR^n)} + |\lambda|\|\eta\|_{W^2_q(\dBR^n)} + \|\eta\|_{W^3_q(\dBR^n)} \\
\le 
&C_{n, q, \eps, \gamma_0}\left\{\|(f, |\lambda|^{1/2} f_d, \nabla f_d, |\lambda|^{1/2} g, \nabla g, |\lambda|h, \nabla^2 h, |\lambda||\nabla'|^{-1}\pd_n h_n) \|_{L_q(\dBR^n)} + |\lambda| \|f_d\|_{\hW^{-1}_q(\dBR^n)} + \|d\|_{W^2_q(\dBR^n)}\right\}. 
\end{align*} 
Moreover we have 
\begin{align*}
|\lambda|^{3/2}\|\eta\|_{W^1_q(\dBR^n)} \le &C_{n, q, \eps,\gamma_0}\left\{\|(f, |\lambda|^{1/2} f_d, \nabla f_d, |\lambda|^{1/2} g, \nabla g, |\lambda|h, \nabla^2 h, |\lambda||\nabla'|^{-1}\pd_n h_n) \|_{L_q(\dBR^n)} \right.\\
&\left. \qquad \qquad \qquad + |\lambda| \|g\|_{\hW^{-1}_q(\dBR^n)} + \|d\|_{W^2_q(\dBR^n)} + |\lambda|^{1/2}\|d\|_{W^1_q(\dBR^n)}\right\}\end{align*}
and 
\begin{align*}
|\lambda|^{2}\|\eta\|_{L_q(\dBR^n)} \le &C_{n, q, \eps,\gamma_0}\left\{\|(f, |\lambda|^{1/2} f_d, \nabla f_d, |\lambda|^{1/2} g, \nabla g, |\lambda|h, \nabla^2 h, |\lambda||\nabla'|^{-1}\pd_n h_n) \|_{L_q(\dBR^n)} \right.\\
&\left. \qquad \qquad \qquad + |\lambda| \|g\|_{\hW^{-1}_q(\dBR^n)} + \|d\|_{W^2_q(\dBR^n)} + |\lambda|\|d\|_{L_q(\dBR^n)}\right\}. 
\end{align*}
\end{thm}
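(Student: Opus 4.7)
The reduction strategy is to eliminate $\eta$ via the kinematic equation $\lambda\eta+u_n=d$ on $\BR^n_0$, giving $\eta=\lambda^{-1}(d-u_n)$. Substituting into the stress jump condition yields
$$\jump{S(u,\theta)\nu}=\jump{g}+\lambda^{-1}\bigl(\jump{\rho}c_g+c_\sigma\Delta'\bigr)(d-u_n)\nu\qquad\text{on}\ \bHS,$$
so $(u,\theta)$ satisfies a problem of the form \eqref{RSwithout} with modified jump datum $\tilde g=g+\lambda^{-1}(\jump{\rho}c_g+c_\sigma\Delta')\Phi\,\nu$, where $\Phi$ is an extension to $\dBR^n$ whose trace on $\bHS$ equals $d-u_n$. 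I would construct $\Phi$ as a harmonic (Poisson-type) extension so that $\Delta'\Phi=-\pd_n^2\Phi$ in the bulk, turning the formally third-order tangential term $c_\sigma\Delta'(d-u_n)$ into a combination of normal derivatives controlled by the $W^2_q$-norms of $d$ and $u_n$ and the $E_q$-quantity $|\nabla'|^{-1}\pd_n\Phi$ that already appears in Theorem \ref{resolventthm}.

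Next I would apply Theorem \ref{resolventthm} to the reduced problem. The $d$-contribution to the norms $\|(|\lambda|^{1/2}\tilde g,\nabla\tilde g)\|_{L_q}$ collapses, after using $|\lambda|\ge\gamma_0\ge 1$, to a multiple of $\|d\|_{W^2_q(\dBR^n)}$. The $u_n$-contribution is bounded by $|\lambda|^{-1/2}$ (respectively $|\lambda|^{-1}$) times quantities which, thanks to the harmonic form of $\Phi$ and the $E_q$-structure, are strictly weaker than quantities on the left-hand side of the resolvent estimate. Choosing $\gamma_0$ sufficiently large depending on $\eps,c_\sigma,c_g,\rho_\pm,\mu_\pm$, a Neumann-series / absorption argument then yields unique solvability and the first claimed resolvent bound for $(u,\theta)$.

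To recover $\eta$, the identity $|\lambda|\eta=d-u_n$ on $\bHS$ together with the trace theorem and the resolvent estimate for $u$ yield directly $|\lambda|\|\eta\|_{W^{2-1/q}_q(\BR^{n-1})}$ and $|\lambda|^2\|\eta\|_{L_q(\BR^{n-1})}$; standard interpolation produces $|\lambda|^{3/2}\|\eta\|_{W^1_q}$. For the top-order $W^3_q$-estimate I would view the normal component of the stress condition as a tangential elliptic equation,
$$c_\sigma\Delta'\eta\ =\ \jump{g_n}-\jump{S(u,\theta)\nu}_n-\jump{\rho}c_g\,\eta\qquad\text{on}\ \bHS,$$
whose right-hand side is controlled in $W^{1-1/q}_q(\BR^{n-1})$ by traces of already-estimated quantities, so elliptic regularity gives $\eta\in W^{3-1/q}_q(\BR^{n-1})$ and a bulk extension delivers the claimed $W^3_q(\dBR^n)$-bound.

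The main obstacle is the construction of $\tilde g$: naive substitution produces tangential second derivatives which, after one more bulk differentiation, overshoot the $W^2_q$-regularity of $d$ and $u_n$. Routing the extension through a harmonic-type operator, together with the $E_q$-structure $|\nabla'|^{-1}\pd_n h_n$ built into Theorem \ref{resolventthm}, is precisely what makes $\tilde g$ land in $W^1_q(\dBR^n)$ with the correct norm; this harmonic substitution and the accompanying symbol bookkeeping form the technical heart of the reduction, while the estimates for $\eta$ are essentially elliptic-regularity and trace-theorem corollaries.
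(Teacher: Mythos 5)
Your reduction eliminates $\eta$ via $\eta=\lambda^{-1}(d-u_n)$ and then tries to close an absorption argument for the modified stress datum $\tilde g=g+\lambda^{-1}(\jump{\rho}c_g+c_\sigma\Delta')\Phi\nu$. The gap is in the regularity of the $u_n$-contribution: Theorem~\ref{resolventthm} requires $\tilde g\in W^1_q(\dBR^n)$ and feeds $\nabla\tilde g$ into the estimate, so the $u_n$-part of $\tilde g$ would have to satisfy $\lambda^{-1}\nabla\Delta' u_n\in L_q$, i.e.\ essentially $u\in W^3_q$, whereas the resolvent estimate only produces $u\in W^2_q$. The harmonic extension $\Delta'\Phi=-\pd_n^2\Phi$ does not lower the order: with trace data $d-u_n|_{\bHS}\in W^{2-1/q}_q$, the Poisson extension yields $\Phi\in W^2_q(\dBR^n)$, hence $\pd_n^2\Phi\in L_q$ only, not $W^1_q$. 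The $E_q$-structure you invoke is attached to $h_n$ (the velocity jump), not to $g$ (the stress jump), so it does not help here either. Consequently the fixed-point/absorption argument does not close, and the bound $|\lambda|\ge\gamma_0$ alone cannot rescue it because the loss is one full derivative, not a small constant.

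The paper avoids this entirely by never substituting $\eta$ into the jump condition. After splitting off the non-tension part $(v,\tau)$ via Theorem~\ref{resolventthm}, it inserts $(\jump{\hh},\jump{\hg'},\jump{\hg_n})=(0,0,-(\jump{\rho}c_g-c_\sigma A^2)\heta)$ into the explicit ODE solution, reads off $\hat w_n(\xi',0)=\phi_{n,\pm n}(\lambda,\xi',0)\jump{\hg_n}$, and uses $\lambda\heta+\hat w_n=\hat{\td}$ to solve algebraically:
\[
\heta=\frac{\det L}{\lambda\det L-A\{\mu_+(B_++A)+\mu_-(B_-+A)\}(\jump{\rho}c_g-c_\sigma A^2)}\,\hat{\td}.
\]
The lower bound $|\CL(\lambda,\xi')|\ge c(|\lambda|+\tA)(|\lambda|^{1/2}+\tA)^3$ for $\lambda\in\Sigma_{\eps,\gamma_0}$, together with a Fourier multiplier theorem, gives $\|\eta\|_{W^3_q}\le C\|\td\|_{W^2_q}$ directly and at the same time produces the refinements $|\lambda|^{3/2}\|\eta\|_{W^1_q}$ and $|\lambda|^2\|\eta\|_{L_q}$. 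Only \emph{after} $\eta$ is estimated in $W^3_q$ is the tension term treated as a known datum $g_n=-(\jump\rho c_g-c_\sigma\Delta')\eta\in W^1_q$ and fed back into Theorem~\ref{resolventthm} to get $(w,\kappa)$. Your elliptic-regularity/trace suggestion for $\eta$ is morally the same step, but you would need the explicit symbol and its lower bound (or an equivalent Lopatinski--Shapiro argument) to actually prove the $W^3_q$-gain; without it there is no mechanism in your scheme to obtain the extra derivative on $\eta$.
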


\begin{thm}\label{maxregthm2}
Let $1<p, q<\infty$. 
Then there exists a constant $\gamma_0\ge1$ such that for any 
\begin{align*}
F\in L_{p,0,\gamma_0}(\BR, L_q(\dBR^n))&,\quad F_d \in W^1_{p,0,\gamma_0}(\BR, \hW^{-1}_q(\BR^n))\cap L_{p,0,\gamma_0}(\BR, W^1_q(\dBR^n)), \\
G\in H^{1/2}_{p,0,\gamma_0}(\BR, L_q(\dBR^n))\cap L_{p,0,\gamma_0}(\BR, W^1_q(\dBR^n))&,\quad H\in W^1_{p,0,\gamma_0}(\BR, L_q(\dBR^n)) \cap L_{p,0,\gamma_0}(\BR, W^2_q(\dBR^n)), \\
H_n\in W^1_{p,0,\gamma_0}(\BR, E_q(\dBR^n))&, \quad D\in L_{p,0,\gamma_0}(\BR, W^2_q(\dBR^n)), 
\end{align*}
problem \eqref{Swith} admits a unique solution $(U, \Theta, Y)$ such that 
\begin{align*}
U&\in W^1_{p,0,\gamma_0}(\BR, L_q(\dBR^n)) \cap L_{p,0,\gamma_0}(\BR, W^2_q(\dBR^n)), \\
\Theta &\in L_{p,0,\gamma_0}(\BR, \hW^1_q(\dBR^n)), \\
Y & \in L_{p,0,\gamma_0}(\BR, W^3_q(\dBR^n))\cap W^1_{p,0,\gamma_0}(\BR, W^2_q(\dBR^n))
\end{align*} 
with the maximal $L_p$-$L_q$ regularity estimate; 
\begin{align*}
&\|e^{-\gamma t}(\pd_t U, \gamma U, \Lambda^{1/2}_\gamma \nabla U, \nabla^2 U, \nabla \Theta)\| _{L_p(\BR, L_q(\dBR^n))} + \|e^{-\gamma t}(\pd_t Y, \gamma Y)\|_{L_p(\BR, W^2_q(\dBR^n))} + \|e^{-\gamma t} Y\|_{L_p(\BR, W^3_q(\dBR^n))}\\
\le 
&C_{n, p, q, \gamma_0}\left\{\|e^{-\gamma t} (F, \Lambda^{1/2}_\gamma F_d, \nabla F_d, \Lambda^{1/2}_\gamma G, \nabla G, \pd_t H, \nabla^2 H, \pd_t (|\nabla'|^{-1}\pd_n H_n)) \|_{L_p(\BR, L_q(\dBR^n))} \right.\\
&\qquad \qquad \left.+ \|e^{-\gamma t} (\pd_t F_d, \gamma F_d)\|_{L_p(\BR, \hW^{-1}_q(\BR^n))} + \|e^{-\gamma t} D\|_{L_p(\BR, W^2_q(\dBR^n))}\right\}, 
\end{align*}
for any $\gamma \ge \gamma_0$. 
Moreover we have that if $D\in H^{1/2}_{p,0, \gamma_0}(\BR, W^1_q(\dBR^n))$ in addition, then $Y\in H^{3/2}_{p,0, \gamma_0}(\BR, W^1_q(\dBR^n))$ and 
\begin{align*}
&\|e^{-\gamma t} \Lambda^{3/2}_\gamma Y\|_{L_p(\BR, W^1_q(\dBR^n))} \\
\le &C_{n, p, q, \gamma_0} \left\{\|e^{-\gamma t} (F, \Lambda^{1/2}_\gamma F_d, \nabla F_d, \Lambda^{1/2}_\gamma G, \nabla G, \pd_t H, \nabla^2 H, \pd_t (|\nabla'|^{-1}\pd_n H_n)) \|_{L_p(\BR, L_q(\dBR^n))} \right.\\
&\qquad \left.+ \|e^{-\gamma t} (\pd_t F_d, \gamma F_d)\|_{L_p(\BR, \hW^{-1}_q(\BR^n))} + \|e^{-\gamma t} D\|_{L_p(\BR, W^2_q(\dBR^n))} + \|e^{-\gamma t}\Lambda^{1/2}_\gamma D\|_{L_p(\BR, W^1_q(\dBR^n))}\right\}
\end{align*}
for any $\gamma\ge \gamma_0$. 
Moreover we have that if $D\in W^1_{p,0, \gamma_0}(\BR, L_q(\dBR^n))$ in addition, then $Y\in W^2_{p,0, \gamma_0}(\BR, L_q(\dBR^n))$ and 
\begin{align*}
&\|e^{-\gamma t} \Lambda^2_\gamma Y\|_{L_p(\BR, L_q(\dBR^n))} \\
\le &C_{n, p, q, \gamma_0} \left\{\|e^{-\gamma t} (F, \Lambda^{1/2}_\gamma F_d, \nabla F_d, \Lambda^{1/2}_\gamma G, \nabla G, \pd_t H, \nabla^2 H, \pd_t (|\nabla'|^{-1}\pd_n H_n)) \|_{L_p(\BR, L_q(\dBR^n))} \right.\\
&\qquad \left.+ \|e^{-\gamma t} (\pd_t F_d, \gamma F_d)\|_{L_p(\BR, \hW^{-1}_q(\BR^n))} + \|e^{-\gamma t} D\|_{L_p(\BR, W^2_q(\dBR^n))} + \|e^{-\gamma t}\Lambda_\gamma D\|_{L_p(\BR, L_q(\dBR^n))}\right\}
\end{align*}
for any $\gamma\ge \gamma_0$. 
\end{thm}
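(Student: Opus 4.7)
The plan is to pass from the resolvent estimates of Theorem \ref{resolventthm2} to the maximal $L_p$-$L_q$ regularity by the operator-valued Fourier multiplier theorem of Weis, following the strategy already used for the no-surface-tension case in \cite{K22} and reproduced in the proof of Theorem \ref{maxregthm}. Extending every datum by zero for $t<0$ and applying the two-sided Laplace transform $\CL$ on the line $\Re\lambda=\gamma$ with $\gamma\ge\gamma_0$ is well-defined since the data belong to $L_{p,0,\gamma_0}$. At each $\lambda=\gamma+i\tau\in\Sigma_{\eps,\gamma_0}$ the transformed system coincides with the resolvent problem \eqref{RSwith} driven by $\CL[F], \CL[F_d], \CL[G], \CL[H], \CL[H_n], \CL[D]$, so Theorem \ref{resolventthm2} produces an explicit solution formula
\[ (\hu,\htheta,\heta)(\lambda)=\CS(\lambda)\bigl(\CL[F],\CL[F_d],\CL[G],\CL[H],\CL[H_n],\CL[D]\bigr), \]
with $\CS(\lambda)$ a family of partial Fourier multipliers in $x'$ depending analytically on $\lambda$.

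The main step is to upgrade the pointwise-in-$\lambda$ bounds of Theorem \ref{resolventthm2} to $\CR$-boundedness in $L_q(\dBR^n)$ of the operator families that correspond to each norm on the left-hand side: typical examples are $\{\lambda\,\CS_U(\lambda)\}$, $\{\Lambda_\gamma^{1/2}\nabla\CS_U(\lambda)\}$, $\{\nabla^2\CS_U(\lambda)\}$, $\{\nabla\CS_\Theta(\lambda)\}$, $\{\lambda\,\CS_Y(\lambda)\}$ and $\{\nabla^3\CS_Y(\lambda)\}$. The symbols assembling $\CS(\lambda)$ are built from the characteristic roots $A_\pm=\sqrt{|\xi'|^2+\rho_\pm\lambda/\mu_\pm}$, from $|\xi'|$, from the weight $|\xi'|^{-1}\pd_n$ (which is the reason the class $E_q$ enters), and from the perturbed Lopatinski determinant and its cofactors; each of them was shown in the proof of Theorem \ref{resolventthm2} to satisfy bell-shaped symbol estimates uniform on $\Sigma_{\eps,\gamma_0}$. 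The $\CR$-boundedness lemma used in \cite{K22, SS11} then converts each symbol bound into the required $\CR$-norm bound, and Weis's theorem on $L_p(\BR,L_q(\dBR^n))$ translates $\CR$-bounded multiplier families in $\lambda$ into bounded operators on the time variable, delivering the first announced estimate. Uniqueness follows by applying the same inequality to the difference of two solutions.

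The refined bounds for $\Lambda_\gamma^{3/2}Y$ in $L_p(\BR,W^1_q(\dBR^n))$ and for $\Lambda_\gamma^2 Y$ in $L_p(\BR,L_q(\dBR^n))$ are obtained by repeating the argument for the extra families $\{\lambda^{3/2}\nabla\CS_Y(\lambda)\}$ and $\{\lambda^2\CS_Y(\lambda)\}$, whose pointwise-in-$\lambda$ control is precisely what the last two displayed inequalities of Theorem \ref{resolventthm2} supply, once the additional data $\Lambda_\gamma^{1/2}D$ or $\Lambda_\gamma D$ are placed on the right-hand side. The expected main obstacle is the bookkeeping of the modified Lopatinski determinant coming from the surface-tension and gravity coupling through the kinematic equation $\pd_t Y+U_n=D$: it introduces contributions of the form $(\jump{\rho}c_g+c_\sigma|\xi'|^2)\lambda^{-1}$ that must be dominated by the leading $\lambda$-part of the no-surface-tension determinant already treated in Theorem \ref{maxregthm}, so that the inverse determinant still obeys the symbolic estimates required for $\CR$-boundedness; it is for this absorption that $\gamma_0$ has to be chosen sufficiently large, and the resulting threshold is the same as the one appearing in Theorem \ref{resolventthm2}.
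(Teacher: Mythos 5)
Your proposal is mathematically sound in outline but follows a genuinely different route from the paper, and you misattribute its key tool. You pass from Theorem~\ref{resolventthm2} to maximal regularity by establishing $\CR$-boundedness of the families $\lambda\mapsto\CS(\lambda)$ and then invoking Weis's operator-valued Fourier multiplier theorem; this is the strategy of Shibata--Shimizu in \cite{SS11}, not of the present paper. The paper's declared point of departure from \cite{SS11} is precisely to avoid the heavy $\CR$-boundedness verification: Theorem~\ref{thm}, imported from \cite{K22}, is an $H^\infty$-calculus criterion which asks only that the symbols $m_\lambda(\xi',x_n)$ be bounded, holomorphic in $(\tau,\xi')\in\tilde\Sigma_\eta^n$, and decay like $x_n^{-1}$; it then directly produces the $L_p$--$L_q$ bound with no intermediate $\CR$-bound. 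When you write ``the $\CR$-boundedness lemma used in \cite{K22, SS11}'' you conflate these two mechanisms: \cite{K22} contains no such lemma, and the paper never proves $\CR$-boundedness.

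Concretely the paper's proof is the sentence ``Theorem~\ref{maxregthm2} is also same as above,'' pointing back to the resolvent computation in section~\ref{final}. Its structure, which your sketch skips, is: split $(U,\Theta)=(V,\Upsilon)+(W,\Xi)$, where $(V,\Upsilon)$ solves the $c_\sigma=c_g=0$ problem with the original data $F,F_d,G,H$ (already handled by Theorem~\ref{maxregthm}); then $(W,\Xi,Y)$ solves the surface-tension problem with \emph{zero} force, divergence and transmission data but modified kinematic datum $\tD:=D-V_n$. For this reduced problem $Y$ is given by a scalar Fourier multiplier with symbol $\det L/\CL(\lambda,\xi')$, estimated via \cite[Lemma 6.1]{SS11} and the PS16 multiplier theorem; $W,\Xi$ are then produced by the $c_\sigma=c_g=0$ solution operator fed with $\jump{g_n}=-(\jump{\rho}c_g-c_\sigma A^2)\heta$, and Theorem~\ref{thm}(ii) transfers the symbol bounds to time. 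Your $\CR$-boundedness route would work if carried through, but it buys nothing here and undoes the paper's main simplification. You would strengthen the proposal by (i) replacing Weis/$\CR$-boundedness with the holomorphy-plus-decay criterion of Theorem~\ref{thm}, and (ii) making the $(V,\Upsilon)\mapsto\tD\mapsto Y\mapsto(W,\Xi)$ reduction explicit, since the two supplementary estimates for $\Lambda_\gamma^{3/2}Y$ and $\Lambda_\gamma^2Y$ come straight out of the scalar multiplier for $\heta$ applied to $\tD$ with the additional $\Lambda_\gamma^{1/2}D$ or $\Lambda_\gamma D$ regularity.
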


\begin{rem}
By interpolation theory, we have 
\begin{align*}
W^1_{p,0, \gamma_0}(\BR, L_q(\dBR^n)) \cap L_{p,0,\gamma_0}(\BR, W^2_q(\dBR^n)) &\subset H^{1/2}_{p,0, \gamma_0}(\BR, W^1_q(\dBR^n)), \\
W^2_{p,0, \gamma_0}(\BR, L_q(\dBR^n))\cap W^1_{p,0,\gamma_0}(\BR, W^2_q(\dBR^n))&\subset H^{3/2}_{p,0, \gamma_0}(\BR, W^1_q(\dBR^n)). 
\end{align*}
\end{rem}

\section{Reduction to the problem only with boundary data}\label{reduction}
In this section we follow the paper \cite{SS11} so that it is enough to consider the case $f=f_d=0$ and $F=F_d=0$ by subtracting solutions of inhomogeneous data. 

We start with whole space problems. 
\begin{lem}[{\cite[Lemma 2.1]{SS11}}]
Let $1<p, q<\infty$ and $\gamma_0\ge0$. \\
{\rm (1)} For any $f_d\in \hW^{-1}_q(\BR^n)\cap W^1_q(\dBR^n)$, there exists a $z\in W^2_q(\dBR^n)$ such that $\dv z = f_d$ in $\dBR^n$, $\jump{z}=0$ on $\dBR^n_0$ and there hold the estimates: 
\begin{align*}
\|z\|_{L_q(\dBR^n)}&\le C_{n,q}\|f_d\|_{\hW^{-1}_q(\dBR^n)}, \\
\|\nabla^{j+1} z\|_{L_q(\dBR^n)}&\le C_{n,q}\|\nabla^j f_d\|_{L_q(\dBR^n)}~(j=0,1). 
\end{align*}
{\rm (2)} For any $F_d\in W^1_{p,0,\gamma_0}(\BR, \hW^{-1}_q(\BR^n)) \cap L_{p,0,\gamma_0}(\BR, W^1_q(\dBR^n))$, there exists a 
\[Z\in W^1_{p,0,\gamma_0}(\BR, L_q(\dBR^n))\cap L_{p,0,\gamma_0}(\BR, W^2_q(\dBR^n))\] 
such that $\dv Z = F_d$ in $\dBR^n\times \BR$, $\jump{Z(t)}=0$ on $\dBR^n_0\times \BR$ and there hold the estimates: 
\begin{align*}
\|e^{-\gamma t} (\pd_t Z, \gamma Z)\|_{L_p(\BR, L_q(\dBR^n))} &\le C_{n,p,q}\|e^{-\gamma t} (\pd_t F_d, \gamma F_d)\|_{L_p(\BR, \hW^{-1}_q(\BR^n))}, \\
\|e^{-\gamma t}\Lambda^{1/2}_\gamma \nabla Z\|_{L_p(\BR, L_q(\dBR^n))}&\le C_{n,p,q}\|e^{-\gamma t}\Lambda^{1/2}_\gamma F_d\|_{L_p(\BR, L_q(\dBR^n))}, \\
\|e^{-\gamma t}\nabla^2 Z\|_{L_p(\BR, L_q(\dBR^n))} &\le C_{n,p,q}\|e^{-\gamma t}\nabla F_d\|_{L_p(\BR, L_q(\dBR^n))}
\end{align*}
for any $\gamma\ge\gamma_0$. 
\end{lem}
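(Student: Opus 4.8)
The plan is to produce $z$ (and, in part (2), $Z$) explicitly as a gradient potential of the data and then to read off the three estimates from standard harmonic analysis; the only genuinely delicate point will be the third-order spatial derivatives, where the jump of $f_d$ across $\bHS$ enters. Concretely, I would regard $f_d\in\hW^{-1}_q(\BR^n)\cap W^1_q(\dBR^n)$ as a function in $L_q(\BR^n)$ — it is $L_q$ as a function, only its normal derivative jumps across $\bHS$ — let $u$ solve $\Delta u=f_d$ in $\BR^n$ under the normalisation $\nabla u\in L_q(\BR^n)$ (possible precisely because $f_d\in\hW^{-1}_q(\BR^n)$), and set $z:=\nabla u$. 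Then $\dv z=f_d$ in $\dBR^n$ is immediate, and since $u\in W^2_{q,\loc}(\BR^n)$ one has $z\in W^1_{q,\loc}(\BR^n)$, so the traces of $z$ from $\uHS$ and from $\lHS$ coincide and $\jump z=0$ on $\bHS$; no uniqueness of $z$ is required.

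For the estimates I would argue as follows. The bound $\|z\|_{L_q(\dBR^n)}\le C\|f_d\|_{\hW^{-1}_q}$ is the mapping property $\nabla\Delta^{-1}\colon\hW^{-1}_q\to L_q$: pairing $z=\nabla u$ with $\phi\in L_{q'}(\BR^n)$ and integrating by parts twice gives $\int z\cdot\phi\,dx=-\int f_d\,(\Delta^{-1}\dv\phi)\,dx$, where $\Delta^{-1}\dv\phi$ is an admissible test field whose gradient is a matrix of Riesz transforms of $\phi$, hence bounded in $L_{q'}$ by $C\|\phi\|_{L_{q'}}$, and a supremum over $\phi$ finishes it. The case $j=0$ of $\|\nabla^{j+1}z\|_{L_q(\dBR^n)}\le C\|\nabla^jf_d\|_{L_q(\dBR^n)}$ is just the Calder\'on--Zygmund estimate $\|\nabla^2u\|_{L_q(\BR^n)}\le C\|f_d\|_{L_q(\BR^n)}$. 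For $j=1$ I would split $\nabla^3u$: every component containing a tangential derivative equals $\nabla^2\Delta^{-1}$ applied to a tangential derivative of $f_d$ — a genuine $L_q(\BR^n)$-function of norm $\le\|\nabla f_d\|_{L_q(\dBR^n)}$ — so Calder\'on--Zygmund applies once more; the remaining component $\pd_n^3u=\pd_n^2(\pd_nu)$ I would treat by writing $\pd_nf_d$, as a distribution on $\BR^n$, as the pointwise normal derivative (an $L_q(\BR^n)$-function, still denoted $\pd_nf_d$) plus the surface measure $\jump{f_d}\,\delta_{\bHS}$, so that $\pd_nu=\Delta^{-1}(\pd_nf_d)+w$ with $w$ the single-layer potential of $\jump{f_d}$. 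The first summand is in $W^2_q(\BR^n)$ with second derivatives bounded by $C\|\pd_nf_d\|_{L_q(\dBR^n)}$, while $w$ is harmonic off $\bHS$ and $\|\nabla^2w\|_{L_q(\dBR^n)}\le C\|\jump{f_d}\|_{\dW^{1-1/q}_q(\bHS)}$ by the standard $L_q$ mapping property of the single-layer potential, and $\|\jump{f_d}\|_{\dW^{1-1/q}_q(\bHS)}\le C\|\nabla f_d\|_{L_q(\dBR^n)}$ by the homogeneous trace theorem. Summing the two pieces gives $\|\nabla^2z\|_{L_q(\dBR^n)}\le C\|\nabla f_d\|_{L_q(\dBR^n)}$, and in particular $z\in W^2_q(\dBR^n)$.

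For part (2) I would set $Z(t):=\nabla\Delta^{-1}F_d(t)$ for a.e.\ $t$; then $\dv Z=F_d$, $\jump{Z(t)}=0$, and $Z$ vanishes for $t<0$, since $\nabla\Delta^{-1}$ acts only in $x$. Because the maps $F_d\mapsto Z,\ \nabla Z,\ \nabla^2Z$ are $\nabla\Delta^{-1},\ \nabla^2\Delta^{-1},\ \nabla^3\Delta^{-1}$, acting in $x$ alone, they commute with $\pd_t$, with $\Lambda^{1/2}_\gamma$, and with multiplication by $e^{-\gamma t}$; hence $\pd_tZ=\nabla\Delta^{-1}\pd_tF_d$, $\Lambda^{1/2}_\gamma\nabla Z=\nabla^2\Delta^{-1}\Lambda^{1/2}_\gamma F_d$, $\nabla^2Z=\nabla^3\Delta^{-1}F_d$, and applying the three estimates of part (1) pointwise in $t$ and then taking the $e^{-\gamma t}$-weighted $L_p(\BR)$-norm in $t$ gives the three claimed bounds together with the stated membership of $Z$.

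The step I expect to be the main obstacle is the third-derivative estimate in part (1): one must recognise that $z=\nabla\Delta^{-1}f_d$ still lies in $W^2_q(\dBR^n)$ although $f_d$ is only piecewise $W^1_q$, and — in order to close the bound by $\|\nabla f_d\|_{L_q(\dBR^n)}$ rather than by the full $W^1_q$-norm of $f_d$ — one has to peel off the single-layer potential carried by the jump $\jump{f_d}$ and estimate it via the homogeneous trace theorem. Everything else is routine Riesz-transform and Calder\'on--Zygmund bookkeeping, and part (2) is then an immediate corollary of part (1).
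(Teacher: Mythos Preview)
The paper does not prove this lemma; it is quoted as \cite[Lemma~2.1]{SS11} and used as a black box in Section~\ref{reduction}, so there is no in-paper proof to compare against.

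Your construction $z=\nabla\Delta^{-1}f_d$ is the standard one (and is the one used in \cite{SS11}), and your argument is correct. The first two estimates are exactly the Riesz-transform and Calder\'on--Zygmund bounds you describe, and your treatment of the third one --- splitting off the tangential third derivatives of $u$ and, for $\pd_n^3u$, decomposing the distributional $\pd_n f_d$ on $\BR^n$ into its pointwise part plus the jump layer $\jump{f_d}\,\delta_{\bHS}$, then estimating the resulting single-layer potential via the homogeneous trace inequality $\|\jump{f_d}\|_{\dot B^{1-1/q}_{q,q}(\BR^{n-1})}\le C\|\nabla f_d\|_{L_q(\dBR^n)}$ --- is a valid and clean way to obtain the homogeneous bound $\|\nabla^2 z\|_{L_q(\dBR^n)}\le C\|\nabla f_d\|_{L_q(\dBR^n)}$ (rather than $C\|f_d\|_{W^1_q(\dBR^n)}$). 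Part~(2) then follows, as you say, by applying the purely spatial operator $\nabla\Delta^{-1}$ pointwise in~$t$ and noting that it commutes with $\pd_t$, with $\Lambda^{1/2}_\gamma$, and with the weight $e^{-\gamma t}$.

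One minor wording issue: ``only its normal derivative jumps across $\bHS$'' is not quite right, since $f_d$ itself may jump (it is only piecewise $W^1_q$); but you handle precisely this jump in the $j=1$ step, so the slip is purely verbal and does not affect the argument.
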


Setting $u=v+z, \tf=f-(\rho\lambda z-\Delta z)$ and $U=V+Z, \tF=F-(\rho \pd_t Z-\Delta Z)$, we would like to find $(v,\theta), (V, \Theta)$ such that 
\begin{equation}\label{RS f_d=0}
\left\{\begin{aligned}
\rho\lambda v -  \DV S(v,\theta)= \tf &\qquad{\rm in}~\dBR^n,   \\
\dv v = 0 &\qquad {\rm in}~\dBR^n, \\
\jump{S(v,\theta)\nu}=\jump{g - \mu D(z)\nu}&\qquad{\rm on}~\bHS, \\
\jump{v}=\jump{h}&\qquad{\rm on}~\BR^n_0. 
\end{aligned}\right.
\end{equation}
and 
\begin{equation}\label{S F_d=0}
\left\{\begin{aligned}
\rho\pd_t V -  \DV S(V,\Theta)=\tF &\qquad{\rm in}~\dBR^n, t>0,   \\
\dv U = 0 &\qquad {\rm in}~\dBR^n, t>0, \\
\jump{S(V,\Theta)\nu}=\jump{G - \mu D(Z)\nu} &\qquad{\rm on}~\bHS, t>0, \\
\jump{V}=\jump{H}&\qquad{\rm on}~\BR^n_0, t>0, \\
V|_{t=0} = 0&\qquad{\rm in}~\BR^{n-1}, 
\end{aligned}\right.
\end{equation}
We have that 
\begin{align*}
\|\tf\|_{L_q(\dBR^n)}&\le \|f\|_{L_q(\dBR^n)} + C_{n,q}(|\lambda|\|f_d\|_{\hW^{-1}_q(\BR^n)} + \|\nabla f_d\|_{L_q(\dBR^n)}), \\
\|e^{-\gamma t}\tF\|_{L_p(\BR, L_q(\dBR^n))}&\le \|e^{-\gamma t} F\|_{L_p(\BR, L_q(\dBR^n))} + C_{n,p,q}(\|e^{-\gamma t}\pd_t F_d\|_{L_p(\BR, \hW^{-1}_q(\BR^n))} + \|e^{-\gamma t} \nabla F_d\|_{L_p(\BR, L_q(\dBR^n))}).  
\end{align*}
Therefore we can reduce the problem $f_d=0, F_d=0$. 

Second, we would like to reduce the case $f=0$, $F=0$. 
Let $P(\xi)=(P_{j,k})_{jk}=(\delta_{jk}-\xi_j\xi_k|\xi|^{-2})_{jk}$ be the Helmholtz decomposition. 
Then functions 
\begin{alignat*}{3}
\psi_\pm (x)=\CF^{-1}_\xi \left[\frac{P(\xi) \CF_x f(\xi)}{\rho_\pm \lambda+\mu_\pm |\xi|^2}\right](x)&, \quad 
\phi_\pm (x)= -\CF^{-1}_\xi\left[\frac{i\xi\cdot\CF_x f(\xi)}{|\xi|^2}\right](x), \\
\Psi_\pm (x, t)= \CL_\lambda \CF^{-1}_\xi \left[\frac{P(\xi) \CF_x \CL F(\xi, \lambda)}{\rho_\pm\lambda+\mu_\pm|\xi|^2}\right](x, t)&, \quad 
\Phi_\pm(x, t)= -\CL_\lambda\CF^{-1}_\xi\left[\frac{i\xi\cdot\CF_x\CL F(\xi, \lambda)}{|\xi|^2}\right](x, t)
\end{alignat*}
satisfy 
\begin{align*}
&(\psi_\pm, \phi_\pm)\in W^2_q(\BR^n)\times \hW^1_q(\BR^n), \\
&\rho_\pm\lambda \psi_\pm - \mu_\pm \Delta \psi_\pm + \nabla \phi_\pm = f,\quad \dv \psi_\pm=0 \quad {\rm in}~\BR^n, \\
&\|(|\lambda|\psi_\pm, |\lambda|^{1/2}\nabla \psi_\pm, \nabla^2 \psi_\pm, \nabla \phi_\pm)\|_{L_q(\BR^n_\pm)} \le C_{n,q,\eps}\|f\|_{L_q(\dBR^n)}
\end{align*}
and 
\begin{align*}
&\Psi_\pm\in W^1_{p,0, \gamma_0}(\BR, L_q(\BR^n))\cap L_{p,0, \gamma_0}(\BR, W^2_q(\BR^n)),\quad \Phi_\pm\in L_{p,0, \gamma_0}(\BR, \hW^1_q(\BR^n)), \\
&\rho_\pm\pd_t \Psi_{\pm} - \mu_\pm \Delta \Psi_\pm + \nabla \Phi_\pm= F,\quad \dv \Psi_\pm=0 \quad {\rm in}~\BR^n\times (0,\infty),\quad \Psi_\pm |_{t=0}=0, \\
&\|e^{-\gamma t}(\pd_t \Psi_\pm, \gamma \Psi_\pm, \Lambda_\gamma^{1/2} \nabla \Psi, \nabla^2 \Psi_\pm, \nabla \Phi_\pm)\|_{L_p(\BR, L_q(\BR^n_\pm))} \le C_{n,p,q, \gamma_0}\|e^{-\gamma t} F\|_{L_p(\BR, L_q(\dBR^n))}, 
\end{align*}
for any $1<p,q<\infty$, $\gamma\ge\gamma_0\ge0, f\in L_q(\dBR^n), F\in L_{p,0,\gamma_0}(\BR, L_q(\dBR^n))$ and $\lambda\in\Sigma_\eps$ with $0<\eps<\pi/2$. 
We define 
\begin{align*}
(\psi, \phi, \Psi, \Phi):=\begin{cases} (\psi_+, \phi_+, \Psi_+, \Phi_+)~{\rm for}~x\in\BR^n_+ ,\\ (\psi_-, \phi_-, \Psi_-, \Phi_-)~{\rm for}~x\in\BR^n_-, \end{cases}
\end{align*}
then we have $\jump{\phi}=0$ on $\BR^n_0$ and $\jump{\Phi(t)}=0$ on $Q_0$. 

Setting $u:=\psi+w$, $\theta:=\phi+\kappa$ in \eqref{RSwithout} with $f_d=0$, and $U:=\Psi+W$, $\Theta=\Phi+\Xi$ in \eqref{Swithout} with $F_d=0$, respectively, we have  
\begin{equation}\label{RSwithout f=f_d=0}
\left\{\begin{aligned}
\rho\lambda w -  \mu \Delta w + \nabla \kappa = 0 &\qquad{\rm in}~\dBR^n,   \\
\dv w = 0 &\qquad {\rm in}~\dBR^n, \\
\jump{S(w,\kappa)\nu}=\jump{g-\mu D(\psi)\nu} &\qquad{\rm on}~\bHS, \\
\jump{w}=\jump{h-\psi}&\qquad{\rm on}~\BR^n_0. 
\end{aligned}\right.
\end{equation}
and 
\begin{equation}\label{Swith f=f_d=0}
\left\{\begin{aligned}
\rho\pd_t W -  \mu \Delta W + \nabla \Xi=0 &\qquad{\rm in}~\dBR^n, t>0,   \\
\dv W = 0 &\qquad {\rm in}~\dBR^n, t>0, \\
\jump{S(W,\Xi)\nu}=\jump{G-\mu D(\Psi)\nu}&\qquad{\rm on}~\bHS, t>0, \\
\jump{W}=\jump{H-\Psi}&\qquad{\rm on}~\BR^n_0, t>0, \\
W|_{t=0} = 0&\qquad{\rm in}~\BR^{n-1}. 
\end{aligned}\right.
\end{equation}
Let 
\begin{align*}
&\tg:=g-\mu D(\psi)\nu, \quad \th:=h-\psi,\\
&\tG:=G-\mu D(\Psi)\nu, \quad \tH:=H-\Psi. 
\end{align*}
Since we have the estimates  
\begin{align*}
&\|(|\lambda|^{1/2}\tg, \nabla \tg, |\lambda|\th, \nabla^2\th, |\lambda||\nabla'|^{-1}\pd_n \th_n)\|_{L_q(\dBR^n)} \\
\le &C\|(f, |\lambda|^{1/2}g, \nabla g, |\lambda|h, \nabla^2 h, |\lambda||\nabla'|^{-1}\pd_n h_n)\|_{L_q(\dBR^n)}, \\
&\|e^{-\gamma t}(\Lambda_\gamma^{1/2} \tG, \nabla \tG, \pd_t \tH, \nabla^2\tH, \pd_t(|\nabla'|^{-1}\pd_n \tH_n))\|_{L_p(\BR, L_q(\dBR^n))} \\
\le &C\|e^{-\gamma t}(F, \Lambda_\gamma^{1/2} G, \nabla G, \pd_t H, \nabla^2 H, \pd_t(|\nabla'|^{-1}\pd_n H_n))\|_{L_p(\BR, L_q(\dBR^n))}, 
\end{align*}
we conclude that $f=f_d=0$ and $F=F_d=0$ are enough to consider in theorems \ref{resolventthm} and \ref{maxregthm}, where we prove 
\begin{align*}
\||\lambda||\nabla'|^{-1}\pd_n \psi_n\|_{L_q(\dBR^n)} &\le C \|f\|_{L_q(\dBR^n)}\\
\|e^{-\gamma t}\pd_t (|\nabla'|^{-1}\pd_n \Psi_n)\|_{L_p(\BR, L_q(\dBR^n))} &\le C \|e^{-\gamma t} F\|_{L_p(\BR, L_q(\dBR^n))}
\end{align*}
in \ref{appA}. 

\section{Solution formula for the problems without $f$, $f_d$ and surface tension}\label{formula}

We give a solution of the resolvent problem \eqref{RSwithout} with $f=f_d=0$ and $\lambda\in \Sigma_\eps$.  
We apply partial Fourier transform with respect to tangential direction $x'\in\BR^{n-1}$. 
We use the notations 
\begin{align*}
\hv(\xi', x_n):= &\CF_{x'}v(\xi', x_n):=\int_{\BR^{n-1}} e^{-ix'\cdot\xi'} v(x', x_n)dx', \\
&\CF^{-1}_{\xi'}w(x',x_n) = \frac{1}{(2\pi)^{n-1}} \int_{\BR^{n-1}} e^{ix'\cdot\xi'} w(\xi', x_n)d\xi' 
\end{align*}
for functions $v, w:\BR^n_\pm \to \BC$. 
Let $u_\pm = {}^t(u_{\pm1}, \ldots, u_{\pm (n-1)}, u_{\pm n})$. 
Here and here after the index $j$ runs from $1$ to $n-1$ if we do not indicate. 

We need to solve the following second order ordinary differential equations; 
\begin{equation}\label{ODE}\left\{
\begin{aligned}
(\rho_\pm \lambda + \mu_\pm |\xi'|^2- \mu_\pm \pd_n^2)\hu_{\pm j} + i\xi_j \htheta_\pm =0\quad &\text{in}~ x_n\neq 0, \\
(\rho_\pm \lambda + \mu_\pm |\xi'|^2-\mu_\pm \pd_n^2)\hu_{\pm n} + \pd_n \htheta_\pm =0\quad &\text{in}~x_n\neq 0, \\
\sum_{j=1}^{n-1} i\xi_j\hu_{\pm j}  + \pd_n \hu_{\pm n}  = 0\quad &\text{in}~x_n\neq 0, \\
\jump{\mu(i\xi_j\hu_n + \pd_n \hu_j)} = - \jump{\hg_j}\quad &\text{on}~x_n = 0,\\
\jump{2\mu\pd_n \hu_n-\htheta} = -\jump{\hg_n}\quad &\text{on}~x_n=0, \\
\jump{\hu}=\jump{\hh}\quad &\text{on}~x_n=0. 
\end{aligned}\right.
\end{equation}
Set 
\[A:=\sqrt{\sum_{j=1}^{n-1}\xi_j^2}, \qquad  B_\pm := \sqrt{\rho_\pm (\mu_\pm)^{-1}\lambda + A^2}\]
with positive real parts. 
Here we consider $\xi'$ as complex values; 
\[\xi_j\in\tilde{\Sigma}_\eta:=\{z\in\BC\setminus\{0\}\mid |\arg z|<\eta\} \cup \{z\in\BC\setminus\{0\}\mid \pi-\eta<|\arg z|\}\]
for $\eta\in(0, \pi/4)$. 
The details are given in Lemma \ref{es}.  

We find the solution of the form 
\[\hu_{\pm j}(\xi', x_n) = \alpha_{\pm j}  (e^{\mp B x_n} - e^{\mp Ax_n}) + \beta_{\pm j} e^{\mp Bx_n} ~(j=1,\ldots, n), \qquad \htheta_\pm (\xi',x_n)=\gamma_\pm  e^{\mp A x_n}. \]
Then, the equations become 
\begin{equation}\label{ODE2}\left\{
\begin{aligned}
- \mu_\pm (B_\pm ^2 - A^2)\alpha_{\pm j} + i\xi_j \gamma_\pm= 0, \\
-\mu_\pm (B_\pm^2 - A^2)\alpha_{\pm n}  \mp  A\gamma_\pm = 0, \\
-i\xi'\cdot\alpha_\pm' \pm A\alpha_{\pm n}=0, \\
 i\xi'\cdot (\alpha_\pm' + \beta_\pm')\mp B_\pm (\alpha_{\pm n}+\beta_{\pm n}) =0, \\
\end{aligned}\right.
\end{equation}
and 
\begin{equation*}\left\{
\begin{aligned}
\mu_+(B_+^2-A^2)\alpha_{+n} + \mu_+ (B_+^2+A^2)\beta_{+n}-\mu_- (B_-^2-A^2)\alpha_{-n} - \mu_-(B_-^2+A^2)\beta_{-n} = i\xi'\cdot \jump{\hg'}, \\
\mu_+(B_+-A)^2\alpha_{+n}-2\mu_+AB_+\beta_{+n}+\mu_-(B_- - A)^2\alpha_{-n} -2\mu_- AB_- \beta_{-n} = -A\jump{\hg'_n}, \\
(B_+-A)\alpha_{+n} + B_+ \beta_{+n}+(B_--A)\alpha_{-n} + B_-\beta_{-n} = i\xi'\cdot\jump{\hh'}, \\
\beta_{+n}-\beta_{-n}=\jump{\hh'_n}. 
\end{aligned}\right.
\end{equation*}
This means 
\begin{align*}
&\begin{bmatrix}
\mu_+(B_++A)&\mu_+(B_+^2+A^2)&-\mu_-(B_-+A)&-\mu_-(B_-^2+A^2)\\
\mu_+(B_+-A)&-2\mu_+ AB_+ & \mu_-(B_--A) &-2\mu_-AB_-\\
1&B_+&1&B_-\\
0&1&0&-1\\
\end{bmatrix}
\begin{bmatrix}
(B_+-A)\alpha_{+n}\\
\beta_{+n}\\
(B_--A)\alpha_{-n}\\
\beta_{-n}
\end{bmatrix}\\
&=
\begin{bmatrix}
i\xi'\cdot\jump{\hg'}\\
-A\jump{\hg_n}\\
i\xi'\cdot\jump{\hh'}\\
\jump{\hh'_n}
\end{bmatrix}
= 
\begin{bmatrix}
i\xi'^T&0&0&0\\
0&-A&0&0\\
0&0&i\xi'^T&0\\
0&0&0&1
\end{bmatrix}
\begin{bmatrix}
\jump{\hg_1}\\
\vdots\\
\jump{\hg_n}\\
\jump{\hh_1}\\
\vdots\\
\jump{\hh_n}
\end{bmatrix}, 
\end{align*}
where $\xi'^T$ is the transpose of $\xi'$. 
We define 
\begin{align*}
&L:=\begin{bmatrix}
\mu_+(B_++A)&\mu_+(B_+^2+A^2)&-\mu_-(B_-+A)&-\mu_-(B_-^2+A^2)\\
\mu_+(B_+-A)&-2\mu_+ AB_+ & \mu_-(B_--A) &-2\mu_-AB_-\\
1&B_+&1&B_-\\
0&1&0&-1\\
\end{bmatrix}, \\
&R:=(r_{ij}):=\begin{bmatrix}
i\xi'^T&0&0&0\\
0&-A&0&0\\
0&0&i\xi'^T&0\\
0&0&0&1
\end{bmatrix}\quad (1\le i\le 4, 1\le j \le 2n). 
\end{align*}
We have 
\begin{align*}
\det{L}&= (\mu_+-\mu_-)^2 A^3 - \{(3\mu_+-\mu_-)\mu_+ B_+ + (3\mu_- - \mu_+)\mu_- B_-\} A^2 \\
&\qquad - \{(\mu_+B_+ + \mu_- B_-)^2 + \mu_+ \mu_- (B_++B_-)^2\}A - (\mu_+B_+ + \mu_-B_-)(\mu_+B_+^2 +\mu_- B_-^2)
\end{align*}
and it is known, in \cite[Lemma 5.5]{SS11}, that the determinant is not zero for $\lambda\in \Sigma_\eps$, $\xi'\in \BR^{n-1}$. 

We introduce the new notation 
\begin{align*}
&\CM_+=\CM_+(A, B_+, x_n)= \frac{e^{-B_+ x_n} - e^{-Ax_n}}{B_+-A}, \\
&\CM-=\CM_-(A, B_-, x_n)= \frac{e^{B_- x_n} - e^{Ax_n}}{B_--A}, \\
&(a_{i,j}):= (L^{-1}R)_{ij} =(\det{L})^{-1}(\sum_{s=1}^4 L_{is}r_{sj})_{ij}\quad (1\le i\le 4, 1\le j\le 2n), 
\end{align*}
where we use the cofactor matrix of $L$, denoted by ${\rm Cof}(L)=(L_{ij})$. 

From these observations, we have 
\begin{align*}
\hu_{+n}(\xi',x_n) &= \sum_{k=1}^n \left\{(a_{1,k}\CM_+ + a_{2,k} e^{-B_+x_n})\jump{\hg_k} + (a_{1,n+k}\CM_+ + a_{2,n+k} e^{-B_+x_n})\jump{\hh_k}\right\}, \\
\hu_{-n}(\xi',x_n) &= \sum_{k=1}^n \left\{(a_{3,k}\CM_- + a_{4,k} e^{B_-x_n})\jump{\hg_k} + (a_{3,n+k}\CM_- + a_{4,n+k} e^{B_-x_n})\jump{\hh_k}\right\}. 
\end{align*}

To simplify, we define the symbols; for $k=1,\ldots, n$, 
\begin{alignat*}{3}
&\phi_{k,+n}(\lambda, \xi', x_n)=a_{1,k}\CM_+ + a_{2,k} e^{-B_+ x_n}, \\
&\psi_{k,+n}(\lambda, \xi', x_n)=a_{1,n+k}\CM_+ + a_{2,n+k}e^{-B_+ x_n}, \\
&\phi_{k,-n}(\lambda, \xi', x_n)=a_{3,k}\CM_- + a_{4,k}e^{B_- x_n}, \\
&\psi_{k,-n}(\lambda, \xi', x_n)=a_{3,n+k}\CM_- + a_{4,n+k} e^{B_- x_n}, 
\end{alignat*}
which derives the solution formula for $u_{\pm n}$; 
\begin{align*}
\hu_{+n}=\hu_{+n}(\xi',x_n) &=\sum_{k=1}^n (\phi_{k,+n}\jump{\hg_k} + \psi_{k,+n} \jump{\hh_k}), \quad x_n>0, \\
 \hu_{-n}=\hu_{-n}(\xi',x_n) &=\sum_{k=1}^n (\phi_{k,-n}\jump{\hg_k} + \psi_{k,-n} \jump{\hh_k}), \quad x_n<0.  
\end{align*}

Since 
\begin{align*}
\gamma_{\pm} = \mp \frac{\mu_\pm (B_\pm +A)}{A} (B_\pm -A)\alpha_{\pm n}
\end{align*}
from the second equation of \eqref{ODE2}, 
by letting 
\begin{align*}
&\chi_{k,+}(\lambda, \xi', x_n) = -  \frac{\mu_+(B_++A)}{A}a_{1,k}e^{-Ax_n}, \\
&\omega_{k,+}(\lambda, \xi', x_n) = -  \frac{\mu_+(B_++A)}{A}a_{1,n+k}e^{-Ax_n},\\
&\chi_{k,-}(\lambda, \xi', x_n) =  \frac{\mu_-(B_-+A)}{A}a_{3,k} e^{Ax_n},\\
&\omega_{k,-}(\lambda, \xi', x_n) = \frac{\mu_-(B_-+A)}{A}a_{3,n+k} e^{Ax_n},
\end{align*}
we have 
\begin{align*}
\htheta_\pm = \htheta_\pm(\xi', x_n) = \sum_{k=1}^n (\chi_{k,\pm} \jump{\hg_k} + \omega_{k,\pm}\jump{\hh_k}), \quad x_n\gtrless0. 
\end{align*}

From the first equation of \eqref{ODE2}, $\alpha_{\pm j}=\mp (i\xi_j/A) \alpha_{\pm n}$. 
From the fourth and the sixth equations of \eqref{ODE}, $\beta_{\pm j}$ satisfy 
\begin{align*}
\begin{bmatrix}
\mu_+ B_+&\mu_- B_-\\
1 & -1
\end{bmatrix}
\begin{bmatrix}
\beta_{+j}\\
\beta_{-j}
\end{bmatrix}
&=\begin{bmatrix}\jump{\hg_j}\\
\jump{\hh_j}
\end{bmatrix}
+ 
\begin{bmatrix}
 - \mu_+(B_+-A)\alpha_{+j} + \mu_+i\xi_j \beta_{+n}- \mu_-(B_- -A)\alpha_{-j}  - \mu_- i\xi_j\beta_{-n}\\
 0
\end{bmatrix}\\
&=\begin{bmatrix}
\jump{\hg_j}\\
\jump{\hh_j}
\end{bmatrix}
+ \frac{i\xi_j}{A}
\begin{bmatrix}
\mu_+ &\mu_+ A  & - \mu_-  & - \mu_- A \\
0&0&0&0
\end{bmatrix}
\begin{bmatrix}
(B_+-A)\alpha_{+n}\\
\beta_{+n}\\
(B_--A)\alpha_{-n}\\
\beta_{-n}
\end{bmatrix}, 
\end{align*}
\begin{align*}
&\begin{bmatrix}
\beta_{+j}\\
\beta_{-j}
\end{bmatrix}
=
\frac{1}{\mu_+ B_+ + \mu_- B_-}\left\{ 
\begin{bmatrix}
\jump{\hg_j} + \mu_- B_- \jump{\hh_j}\\
\jump{\hg_j} - \mu_+ B_+ \jump{\hh_j}
\end{bmatrix}
+ 
\frac{i\xi_j}{A} 
\begin{bmatrix}
\mu_+ &\mu_+ A& - \mu_- & - \mu_- A\\ 
\mu_+ &\mu_+ A& - \mu_- & - \mu_- A
\end{bmatrix}
\begin{bmatrix}
(B_+-A)\alpha_{+n}\\
\beta_{+n}\\
(B_--A)\alpha_{-n}\\
\beta_{-n}
\end{bmatrix}
\right\}\\
&=\frac{1}{\mu_+ B_+ + \mu_- B_-}
\begin{bmatrix}
\jump{\hg_j} + \mu_- B_- \jump{\hh_j}+ \frac{i\xi_j}{A} \sum_{k=1}^n \{\mu_+(a_{1,k}+A a_{2,k})\jump{\hg_k} - \mu_-(a_{3,n+k}+Aa_{4,n+k})\jump{\hh_k}\}\\
\jump{\hg_j} - \mu_+ B_+ \jump{\hh_j} + \frac{i\xi_j}{A} \sum_{k=1}^n \{\mu_+(a_{1,k}+A a_{2,k})\jump{\hg_k} - \mu_-(a_{3,n+k}+Aa_{4,n+k})\jump{\hh_k}\} 
\end{bmatrix}. 
\end{align*}
Therefore 
\begin{align*}
\hu_{\pm j}=\hu_{\pm j}(\xi', x_n)&=(B_\pm - A)\alpha_{\pm j} \CM_\pm + \beta_{\pm j}e^{\mp B_\pm x_n}\\
&=:\sum_{k=1}^n(\phi_{k,\pm j}\jump{\hg_k} + \psi_{k,\pm j}\jump{\hh_k}), \quad x_n \gtrless0, 
\end{align*}
where 
\begin{align*}
\phi_{k,+j}(\lambda, \xi', x_n)&=-\frac{i\xi_j}{A} a_{1,k}\CM_+ + \frac{1}{\mu_+ B_+ + \mu_- B_-}(\delta_{k,j}+ \frac{i\mu_+ \xi_j}{A}(a_{1,k}+Aa_{2,k}))e^{-B_+ x_n}, \\
\psi_{k,+j}(\lambda, \xi', x_n)&=-\frac{i\xi_j}{A} a_{1,n+k}\CM_+ +  \frac{1}{\mu_+ B_+ + \mu_- B_-}(\mu_- B_- \delta_{k,j} - \frac{i\mu_- \xi_j}{A} (a_{3,n+k} + Aa_{4,n+k}))e^{-B_+ x_n}, \\
\phi_{k,-j}(\lambda, \xi', x_n)&=\frac{i\xi_j}{A} a_{3,k}\CM_- + \frac{1}{\mu_+ B_+ + \mu_- B_-}(\delta_{k,j}+ \frac{i\mu_+ \xi_j}{A}(a_{1,k}+Aa_{2,k}))e^{B_- x_n}, \\
\psi_{k,-j}(\lambda, \xi', x_n)&=\frac{i\xi_j}{A} a_{3,n+k}\CM_- -  \frac{1}{\mu_+ B_+ + \mu_- B_-}(\mu_+ B_+ \delta_{k,j} + \frac{i\mu_- \xi_j}{A} (a_{3,n+k} + Aa_{4,n+k}))e^{B_- x_n}. 
\end{align*}

Here we introduce a new notation $\jjump{f}(x', x_n):=f(x', x_n) - f(x', -x_n)$ for $f:\dBR^n\to \BC$. 
Then we have $\jump{f}(x')=\lim_{x_n\to+0}\jjump{f}(x', x_n)$ and  $\jump{a}(\xi')=\mp \int_{\BR_\pm} \jjump{\pd_n a}(\xi',y_n) dy_n$ for a function $a$ with $a(\cdot, x_n)\to0$ as $x_n\to \pm\infty$. 
So, We obtain the solution formula; 
\begin{align}
u_{\pm j}(x)&=\mp \sum_{k=1}^n \left\{\int_{\BR_\pm} \CF_{\xi'}^{-1}\left[\pd_n\phi_{k, \pm j}(\lambda, \xi', x_n+y_n)\CF_{x'} \jjump{g_k}\right](x, y_n) dy_n \right. \nonumber\\
&\qquad \quad  + \int_{\BR_\pm} \CF_{\xi'}^{-1} \left[\phi_{k, \pm j}(\lambda, \xi', x_n+y_n)\CF_{x'} \jjump{\pd_n g_k}\right](x,y_n) dy_n\nonumber\\
&\qquad \quad + \int_{\BR_\pm} \CF_{\xi'}^{-1}\left[\pd_n\psi_{k, \pm j}(\lambda, \xi', x_n+y_n)\CF_{x'} \jjump{h_k}\right](x, y_n) dy_n\nonumber\\
&\qquad \quad \left. + \int_{\BR_\pm} \CF_{\xi'}^{-1} \left[\psi_{k, \pm j}(\lambda, \xi', x_n+y_n)\CF_{x'} \jjump{\pd_n h_k}\right](x,y_n) dy_n\right\},  \quad (j=1, \ldots, n), \label{sol1}\\
\theta_\pm (x)&= \mp \sum_{k=1}^n \left\{\int_{\BR_\pm}  \CF_{\xi'}^{-1}\left[\pd_n\chi_{k,\pm}(\lambda, \xi', x_n+y_n)\CF_{x'} \jjump{g_k}\right](x, y_n) dy_n \right.\nonumber\\
&\qquad \quad  + \int_{\BR_\pm} \CF_{\xi'}^{-1} \left[\chi_{k, \pm}(\lambda, \xi', x_n+y_n)\CF_{x'} \jjump{\pd_n g_k}\right](x,y_n) dy_n \nonumber\\
&\qquad \quad + \int_{\BR_\pm}  \CF_{\xi'}^{-1}\left[\pd_n\omega_{k,\pm}(\lambda, \xi', x_n+y_n)\CF_{x'} \jjump{h_k}\right](x, y_n) dy_n \nonumber\\
&\qquad \quad \left. + \int_{\BR_\pm} \CF_{\xi'}^{-1} \left[\omega_{k, \pm}(\lambda, \xi', x_n+y_n)\CF_{x'} \jjump{\pd_n h_k}\right](x,y_n) dy_n\right\}. \label{sol2}
\end{align}

Since Laplace transformed non-stationary Stokes equations \eqref{Swithout} with $F=F_d=0$ on $\BR$ are the resolvent problem \eqref{RSwithout} with $f=f_d=0$, we have the following formula; 
\begin{align}
U_{\pm j}(x, t)&=\mp \CL_{\lambda}^{-1}\sum_{k=1}^n \left\{\int_{\BR_\pm} \CF_{\xi'}^{-1}\left[\pd_n\phi_{k, \pm j}(\lambda, \xi', x_n+y_n)\CF_{x'} \CL\jjump{G_k}\right](x, y_n) dy_n \right.\nonumber\\
&\qquad \quad  + \int_{\BR_\pm} \CF_{\xi'}^{-1} \left[\phi_{k, \pm j}(\lambda, \xi', x_n+y_n)\CF_{x'}\CL \jjump{\pd_n G_k}\right](x,y_n) dy_n\nonumber\\
&\qquad \quad + \int_{\BR_\pm} \CF_{\xi'}^{-1}\left[\pd_n\psi_{k, \pm j}(\lambda, \xi', x_n+y_n)\CF_{x'} \CL\jjump{H_k}\right](x, y_n) dy_n\nonumber\\
&\qquad \quad \left. + \int_{\BR_\pm} \CF_{\xi'}^{-1} \left[\psi_{k, \pm j}(\lambda, \xi', x_n+y_n)\CF_{x'} \CL\jjump{\pd_n H_k}\right](x,y_n) dy_n\right\},  \quad (j=1, \ldots, n),\label{sol3}\\
\Theta_\pm (x, t)&= \mp\CL_\lambda^{-1} \sum_{k=1}^n \left\{\int_{\BR_\pm}  \CF_{\xi'}^{-1}\left[\pd_n\chi_{k,\pm}(\lambda, \xi', x_n+y_n)\CF_{x'}\CL \jjump{G_k}\right](x, y_n) dy_n \right.\nonumber\\
&\qquad \quad  + \int_{\BR_\pm} \CF_{\xi'}^{-1} \left[\chi_{k, \pm}(\lambda, \xi', x_n+y_n)\CF_{x'} \CL\jjump{\pd_n G_k}\right](x,y_n) dy_n \nonumber\\
&\qquad \quad + \int_{\BR_\pm}  \CF_{\xi'}^{-1}\left[\pd_n\omega_{k,\pm}(\lambda, \xi', x_n+y_n)\CF_{x'}\CL \jjump{H_k}\right](x, y_n) dy_n \nonumber\\
&\qquad \quad \left. + \int_{\BR_\pm} \CF_{\xi'}^{-1} \left[\omega_{k, \pm}(\lambda, \xi', x_n+y_n)\CF_{x'} \CL\jjump{\pd_n H_k}\right](x,y_n) dy_n\right\}. \label{sol4}
\end{align}

\section{Proof of estimates for the problem without surface tension and gravity}\label{proof}

We decompose the solution \eqref{sol1}, \eqref{sol2} so that the independent variables are the right-hand side of the resolvent estimates. 
Analysis of the solutions \eqref{sol3} and \eqref{sol4} are based on the analysis of corresponding resolvent equations. 

We prepare a theorem to prove the main theorems. 
Let us define the operators $T$ and $\tilde{T}_\gamma$ by 
\begin{align*}
T[m] f(x)&=\int_0^\infty [\CF^{-1}_{\xi'} m(\xi', x_n+y_n) \CF_{x'} f](x, y_n)dy_n, \\
\tilde{T}_\gamma[m_\lambda] g(x, t)&=\CL_\lambda^{-1} \int_0^\infty [\CF^{-1}_{\xi'} m_\lambda(\xi', x_n+y_n) \CF_{x'} \CL g](x,y_n)dy_n, \\
&=[e^{\gamma t}\CF^{-1}_{\tau\to t} T[m_\lambda] \CF_{t\to\tau} (e^{-\gamma t}g)](x, t), 
\end{align*}
where $\lambda=\gamma+i\tau\in\Sigma_\eps$, $m, m_\lambda:\uHS\to\BC$ are multipliers, and $f:\uHS\to\BC$ and $g:\BR\times\uHS\to\BC$. 
The following theorem has shown in the paper \cite{K22}. 

\begin{thm}\label{thm}
{\rm (i)}  
Let $m$ satisfy the following two conditions: \\
{\rm (a)} There exists $\eta\in(0,\pi/2)$ such that $\{m(\cdot, x_n), x_n>0\}\subset H^\infty(\tilde{\Sigma}_\eta^{n-1})$. \\      
{\rm (b)} There exist $\eta\in(0,\pi/2)$ and $C>0$ such that $\sup_{\xi'\in\tilde{\Sigma}^{n-1}_\eta}|m(\xi', x_n)|\le Cx_n^{-1}$ for all $x_n>0$. \\
Then $T[m]$ is a bounded linear operator on $L_q(\uHS)$ for every $1<q<\infty$. \\
{\rm (ii)} Let $\gamma_0\ge0$ and let $m_\lambda$ satisfy the following two conditions: \\
{\rm (c)} There exists $\eta\in(0,\pi/2-\eps)$ such that for each $x_n>0$ and $\gamma\ge\gamma_0$, 
\[\tilde{\Sigma}_\eta^n \ni (\tau, \xi') \mapsto m_\lambda(\xi',x_n) \in\BC\]
 is bounded and holomorphic. \\
{\rm (d)} There exist $\eta\in(0,\pi/2-\eps)$ and $C>0$ such that $\sup\{|m_\lambda(\xi', x_n)| \mid (\tau, \xi')\in\tilde{\Sigma}_\eta^n\}\le Cx_n^{-1}$ for all $\gamma\ge\gamma_0$ and $x_n>0$. \\
Then $\tilde{T}_\gamma[m_\lambda]$ satisfies  
\[\|e^{-\gamma t}\tilde{T}_\gamma g\|_{L_p(\BR, L_q(\uHS))} \le C \|e^{-\gamma t} g\|_{L_p(\BR, L_q(\uHS))}\]
for every $\gamma\ge\gamma_0$ and $1<p,q<\infty$. 
\end{thm}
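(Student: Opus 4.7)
The plan is to reduce both parts to the same two-step scheme. First, for each frozen ``height'' $z=x_n+y_n>0$, one interprets the partial Fourier multiplier with symbol $m(\cdot,z)$, respectively $m_\lambda(\cdot,z)$, as a bounded operator on $L_q(\BR^{n-1})$, respectively on $L_p(\BR;L_q(\BR^{n-1}))$, with operator norm at most $C z^{-1}$. Second, one handles the remaining one-dimensional integration against the kernel $(x_n+y_n)^{-1}$ on the half-line by the classical Hilbert integral inequality.

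For part (i), the starting point is that (a) and (b) together place $m(\cdot,x_n)$ in $H^\infty(\tilde\Sigma_\eta^{n-1})$ with $H^\infty$-norm bounded by $Cx_n^{-1}$. Applying Cauchy's integral formula on polydiscs inscribed in a slightly thinner polysector $\tilde\Sigma_{\eta'}^{n-1}$ with $\eta'<\eta$ yields the Mihlin-type estimates
\[
|\xi'^{\alpha}\,\pd_{\xi'}^{\alpha} m(\xi',x_n)|\le C_\alpha\,x_n^{-1},\qquad \xi'\in\BR^{n-1}\setminus\{0\},\ x_n>0,
\]
for every multi-index $\alpha$. The Mihlin multiplier theorem then gives, for each fixed $x_n>0$, the bound $\|\CF_{\xi'}^{-1}[m(\xi',x_n)\CF_{x'}\varphi]\|_{L_q(\BR^{n-1})}\le C_q x_n^{-1}\|\varphi\|_{L_q(\BR^{n-1})}$. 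Substituting $z=x_n+y_n$ and applying Minkowski's integral inequality in $x'$ produces
\[
\|T[m]f(\cdot,x_n)\|_{L_q(\BR^{n-1})}\le C_q\int_0^\infty\frac{\|f(\cdot,y_n)\|_{L_q(\BR^{n-1})}}{x_n+y_n}\,dy_n,
\]
and Hilbert's inequality on $\BR_+$ closes the estimate.

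For part (ii) the same scheme is executed after adjoining the dual time variable $\tau$. Conditions (c) and (d) now place the full symbol $m_\lambda(\cdot,x_n)$ in $H^\infty(\tilde\Sigma_\eta^{n})$ with uniform bound $\lesssim x_n^{-1}$ for all $\gamma\ge\gamma_0$, and the same Cauchy-formula trick produces joint Mihlin-type derivative estimates in $(\tau,\xi')$. Since $L_q(\BR^{n-1})$ is a UMD space, the operator-valued Mihlin theorem---fed by the $\CR$-boundedness of the $\tau$-section multiplier family, which follows from the sectorial symbol estimate---yields
\[
\bigl\|\CF_{(\tau,\xi')}^{-1}[m_\lambda(\xi',x_n)\CF_{(t,x')}\Phi]\bigr\|_{L_p(\BR;L_q(\BR^{n-1}))}\le C\,x_n^{-1}\,\|\Phi\|_{L_p(\BR;L_q(\BR^{n-1}))},
\]
uniformly in $\gamma\ge\gamma_0$. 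Composing this with the identity $\tilde T_\gamma[m_\lambda]g=e^{\gamma t}\CF^{-1}_{\tau\to t}T[m_\lambda]\CF_{t\to\tau}(e^{-\gamma t}g)$, then using Minkowski's inequality in $(t,x')$ and once more Hilbert's inequality in $x_n$, delivers the claimed estimate.

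The main obstacle is the passage from the pointwise $H^\infty$-bound on the sectorial symbol to an honest Fourier multiplier bound in the mixed-norm space $L_p(\BR;L_q(\BR^{n-1}))$: one must either verify $\CR$-boundedness of the associated operator family indexed by $\tau$, or, exploiting the fact that $m_\lambda$ is scalar-valued, invoke Fubini together with the vector-valued scalar Mihlin theorem on the UMD space $L_q(\BR^{n-1})$. Once this step is secured, the remainder of the argument is standard Cauchy-formula bookkeeping combined with Hilbert's inequality, so no ``hard'' analysis beyond the multiplier theorem is required.
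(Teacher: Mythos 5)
Your treatment of part~(i) is correct: the polysector $H^\infty$-bound combined with Cauchy's formula on inscribed polydiscs yields Mihlin conditions with constant $\lesssim x_n^{-1}$, the Mihlin theorem gives $\|T_{m(\cdot,z)}\|_{\CB(L_q(\BR^{n-1}))}\le Cz^{-1}$ for each height $z$, and Minkowski followed by Hilbert's inequality on $\BR_+$ finishes. (The paper itself does not prove Theorem~\ref{thm} but cites \cite{K22}, which follows essentially this route.)

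Part~(ii), however, contains a genuine gap arising from the order in which you carry out the two reductions. After obtaining the fixed-height space-time multiplier bound in $L_p(\BR;L_q(\BR^{n-1}))$, you propose to apply Minkowski to move the $L_p(\BR_t;L_q(\BR^{n-1}_{x'}))$-norm past the $dy_n$-integral and then Hilbert's inequality in $x_n$. What this chain controls is $\|e^{-\gamma t}\tilde T_\gamma g\|_{L_q(\BR_+^{x_n};\,L_p(\BR_t;\,L_q(\BR^{n-1}_{x'})))}$, not the stated quantity $\|e^{-\gamma t}\tilde T_\gamma g\|_{L_p(\BR_t;\,L_q(\uHS))}$; the two mixed norms agree only when $p=q$, and for $p\neq q$ the one-sided Minkowski embeddings between $L_q(\BR_+;L_p(\BR))$ and $L_p(\BR;L_q(\BR_+))$ never let you pass from the bound you have to the one you want, since the source or the target side is always on the wrong side of the inequality. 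The correct order of operations is the reverse: first prove $\CR$-boundedness on $L_q(\uHS)$ of the full kernel families $\{T[m_\lambda]:\tau\in\BR\setminus\{0\}\}$ and $\{\tau\partial_\tau T[m_\lambda]\}$. This is done by Khintchine's inequality (reducing to a square-function estimate), the $\ell^2$-valued Minkowski inequality in $y_n$, $\CR$-boundedness of the scalar Mihlin-multiplier family $\{zK_\lambda(z)\}$ on $L_q(\BR^{n-1})$ (which your Cauchy-formula estimates supply) to extract the factor $(x_n+y_n)^{-1}$, and then Hilbert's inequality on $L_q(\BR_+)$ at the square-function level. Only after this $\CR$-bound on $L_q(\uHS)$ is in hand does one invoke Weis' operator-valued Fourier multiplier theorem in the single variable $t$ on the UMD space $L_q(\uHS)$, which delivers the estimate in $L_p(\BR;L_q(\uHS))$ for all $1<p,q<\infty$. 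You have all the right ingredients; what is missing is that the $x_n$-integration must be absorbed into the $\CR$-bound \emph{before}, not after, the time multiplier theorem is applied.
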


By using the following identity
\[B_\pm^2=\rho_\pm (\mu_\pm)^{-1}\lambda+\sum_{m=1}^{n-1} \xi_m^2, \qquad 1=\frac{B_\pm^2}{B_\pm^2}=\frac{\rho_\pm (\mu_\pm)^{-1}\lambda^{1/2}}{B_\pm^2}\lambda^{1/2} - \sum_{m=1}^{n-1}\frac{i\xi_m}{B_\pm^2}(i\xi_m), \]
we have 
\begin{align*}
u_{\pm j}(x)&=\mp \sum_{k=1}^n \left\{\int_{\BR_\pm} \CF_{\xi'}^{-1}\left[\rho_\pm(\mu_\pm)^{-1}\lambda^{1/2}B_\pm^{-2}\pd_n\phi_{k, \pm j}(\lambda, \xi', x_n+y_n)\CF_{x'} \jjump{\lambda^{1/2}g_k}\right](x, y_n) dy_n \right. \\
&\qquad \qquad - \sum_{m=1}^{n-1} \int_{\BR_\pm} \CF_{\xi'}^{-1}\left[i\xi_m B_\pm^{-2} \pd_n\phi_{k, \pm j}(\lambda, \xi', x_n+y_n)\CF_{x'} \jjump{\pd_m g_k}\right](x, y_n) dy_n\\
&\qquad \qquad  + \int_{\BR_\pm} \CF_{\xi'}^{-1} \left[\phi_{k, \pm j}(\lambda, \xi', x_n+y_n)\CF_{x'} \jjump{\pd_n g_k}\right](x,y_n) dy_n\\
&\qquad \qquad + \int_{\BR_\pm} \CF_{\xi'}^{-1}\left[B_\pm^{-2}\pd_n\psi_{k, \pm j}(\lambda, \xi', x_n+y_n)\CF_{x'} \jjump{(\rho_\pm(\mu_\pm)^{-1}\lambda - \Delta')h_k}\right](x, y_n) dy_n\\
&\qquad \qquad  + \int_{\BR_\pm} \CF_{\xi'}^{-1} \left[\rho_\pm(\mu_\pm)^{-1}\lambda^{1/2}B_\pm^{-2}\psi_{k, \pm j}(\lambda, \xi', x_n+y_n)\CF_{x'} \jjump{\lambda^{1/2}\pd_n h_k}\right](x,y_n) dy_n \\
&\qquad \qquad \left.- \sum_{m=1}^{n-1} \int_{\BR_\pm} \CF_{\xi'}^{-1} \left[i\xi_m B_\pm^{-2}\psi_{k, \pm j}(\lambda, \xi', x_n+y_n)\CF_{x'} \jjump{\pd_m \pd_n h_k}\right](x,y_n) dy_n \right\} \quad (j=1, \ldots, n), \\
\theta_\pm(x)&=\mp \sum_{k=1}^n \left\{\int_{\BR_\pm} \CF_{\xi'}^{-1}\left[\rho_\pm(\mu_\pm)^{-1}\lambda^{1/2}B_\pm^{-2}\pd_n\chi_{k,\pm}(\lambda, \xi', x_n+y_n)\CF_{x'} \jjump{\lambda^{1/2}g_k}\right](x, y_n) dy_n \right. \\
&\qquad \qquad - \sum_{m=1}^{n-1} \int_{\BR_\pm} \CF_{\xi'}^{-1}\left[i\xi_m B_\pm^{-2} \pd_n\chi_{k, \pm}(\lambda, \xi', x_n+y_n)\CF_{x'} \jjump{\pd_m g_k}\right](x, y_n) dy_n\\
&\qquad \qquad  + \int_{\BR_\pm} \CF_{\xi'}^{-1} \left[\chi_{k, \pm}(\lambda, \xi', x_n+y_n)\CF_{x'} \jjump{\pd_n g_k}\right](x,y_n) dy_n\\
&\qquad \qquad + \int_{\BR_\pm} \CF_{\xi'}^{-1}\left[B_\pm^{-2}\pd_n\omega_{k, \pm}(\lambda, \xi', x_n+y_n)\CF_{x'} \jjump{(\rho_\pm(\mu_\pm)^{-1}\lambda - \Delta')h_k}\right](x, y_n) dy_n\\
&\qquad \qquad  + \int_{\BR_\pm} \CF_{\xi'}^{-1} \left[\rho_\pm(\mu_\pm)^{-1}\lambda^{1/2}B_\pm^{-2}\omega_{k, \pm}(\lambda, \xi', x_n+y_n)\CF_{x'} \jjump{\lambda^{1/2}\pd_n h_k}\right](x,y_n) dy_n \\
&\qquad \qquad \left.- \sum_{m=1}^{n-1} \int_{\BR_\pm} \CF_{\xi'}^{-1} \left[i\xi_m B_\pm^{-2}\omega_{k, \pm}(\lambda, \xi', x_n+y_n)\CF_{x'} \jjump{\pd_m \pd_n h_k}\right](x,y_n) dy_n \right\}. 
\end{align*}

Let $S_\pm^u(\lambda, \xi', x_n)$ and $S^\theta_\pm(\lambda, \xi', x_n)$ be any of symbols; 
\begin{align*}
&S_\pm^u(\lambda, \xi', x_n):=\begin{cases} \rho_\pm(\mu_\pm)^{-1}\lambda^{1/2}B_\pm^{-2}\pd_n\phi_{k, \pm j}(\lambda, \xi', x_n) & \text{or}, \\
i\xi_m B_\pm^{-2}\pd_n \phi_{k,\pm j}(\lambda, \xi', x_n) & \text{or}, \\
\phi_{k, \pm j}(\lambda, \xi', x_n) & \text{or}, \\
B_\pm^{-2} \pd_n \psi_{k, \pm j}(\lambda, \xi', x_n) & \text{or}, \\
\rho_\pm (\mu_\pm)^{-1} \lambda^{1/2}B_\pm^{-2}\psi_{k, \pm j}(\lambda, \xi', x_n)&\text{or}, \\
i\xi_m B_\pm^{-2}\psi_{k, \pm j}(\lambda, \xi', x_n), &\end{cases} \\
&S^\theta_\pm(\lambda, \xi', x_n):=\begin{cases} \rho_\pm(\mu_\pm)^{-1}\lambda^{1/2}B_\pm^{-2}\pd_n\chi_{k, \pm}(\lambda, \xi', x_n) & \text{or}, \\
i\xi_m B_\pm^{-2}\pd_n \chi_{k,\pm}(\lambda, \xi', x_n) & \text{or}, \\
\chi_{k, \pm}(\lambda, \xi', x_n) & \text{or}, \\
B_\pm^{-2} \pd_n \omega_{k, \pm}(\lambda, \xi', x_n) & \text{or}, \\
\rho_\pm (\mu_\pm)^{-1} \lambda^{1/2}B_\pm^{-2}\omega_{k, \pm}(\lambda, \xi', x_n)~(k\neq n)&\text{or}, \\
i\xi_m B_\pm^{-2}\omega_{k, \pm}(\lambda, \xi', x_n). &\end{cases}
\end{align*}
We shall prove that all of the symbols are bounded in the sense that 
\begin{align}
&\sup_{\substack{(\lambda, \xi')\in \Sigma_\eps\times \tilde{\Sigma}_\eta^{n-1}\\ \ell, \ell'=1, \ldots, n-1}}\left\{(|\lambda|+|\lambda|^{1/2}|\xi_\ell|+|\xi_\ell| |\xi_{\ell'}|)|S^u_\pm|+(|\lambda|^{1/2}+|\xi_\ell|)|\pd_n S^u_\pm| + |\pd_n^2 S^u_\pm| + |\xi_\ell| |S^\theta_\pm| + |\pd_n  S^\theta_\pm|\right\} \nonumber\\
&<C(\pm x_n)^{-1}\label{symbol}
\end{align}
for suitable $\eps, \eta$ so that we prove theorem \ref{resolventthm} with $f=f_d=0$. 

Following the paper \cite{K22}, we have some identities
\begin{align*}
\pd_n \CM_\pm (A, B_\pm ,x_n) &= \mp e^{\mp B_\pm x_n} \mp A\CM_\pm (A, B_\pm,x_n), \\
\pd^2_n  \CM_\pm (A, B_\pm ,x_n)  &= (A+B_\pm)e^{\mp B_\pm x_n} + A^2 \CM_\pm (A, B_\pm ,x_n) , \\
\pd^3_n \CM_\pm (A, B_\pm ,x_n) &=\mp (A^2+AB_\pm+B_\pm^2)e^{\mp B_\pm x_n} \mp A^3\CM_\pm (A, B_\pm ,x_n) 
\end{align*}
and an useful lemma; 
\begin{lem}\label{es}
Let $0<\eps<\pi/2$ and $0<\eta<\min\{\pi/4, \eps/2\}$. 
Then for any $(\lambda, \xi',x_n) \in \Sigma_\eps\times \tilde{\Sigma}_\eta^{n-1}\times \BR_\pm$, letting $A:=\sqrt{\sum_{j=1}^{n-1}\xi_j^2}$, $B_\pm:=\sqrt{\rho_\pm (\mu_\pm)^{-1}\lambda + A^2}$ and $\tA:=\sqrt{\sum_{j=1}^{n-1}|\xi_j|^2}$, we have 
\begin{align*}
c\tA & \le  \Re A \le |A| \le \tA,\\
c(|\lambda|^{1/2}+\tA)&\le \Re B_\pm \le |B_\pm| \le C(|\lambda|^{1/2} + \tA), \\
|\pd_n^m e^{\mp Ax_n}| & \le C\tA^{-1+m} (\pm x_n)^{-1}, ~{\rm for}~x_n\gtrless 0, \\
|\pd_n^m \CM_\pm(A, B_\pm, x_n)| &\le C(|\lambda|^{1/2}+\tA)^{-2+m}(\pm x_n)^{-1},  ~{\rm for}~x_n\gtrless 0\\
|\pd_n^m e^{\mp B_\pm x_n}| &\le C(|\lambda|^{1/2}+\tA)^{-1+m}(\pm x_n)^{-1}, ~{\rm for}~x_n\gtrless 0, \\
c(|\lambda|^{1/2}+\tA) &\le |\mu_+ B_+ + \mu_- B_-|
\end{align*}
for $m=0,1,2,3$, with positive constants $c$ and $C$, which are independent of $\lambda, \xi', x_n$. 
\end{lem}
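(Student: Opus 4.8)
The plan is to establish Lemma \ref{es} by treating the estimates one at a time, all of which reduce to elementary facts about the square-root function and about the scalar building blocks $e^{\mp Ax_n}$, $e^{\mp B_\pm x_n}$, $\CM_\pm$. First I would fix $0<\eps<\pi/2$ and $0<\eta<\min\{\pi/4,\eps/2\}$ and record the geometric consequence of these choices: for $\xi_j\in\tilde\Sigma_\eta$ the quantity $\sum_{j=1}^{n-1}\xi_j^2$ lies in a sector of half-angle $2\eta<\pi/2$ about the positive real axis, so $A=\sqrt{\sum\xi_j^2}$ is well defined with positive real part and obeys $\arg A\in(-\eta,\eta)$. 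From $|\arg A|<\eta<\pi/4$ one gets $\Re A\ge(\cos\eta)|A|\ge c\tilde A$, while $|A|^2=|\sum\xi_j^2|\le\sum|\xi_j|^2=\tilde A^2$ gives $|A|\le\tilde A$; this is the first line. For $B_\pm$, note $\rho_\pm(\mu_\pm)^{-1}\lambda$ lies in $\Sigma_\eps$ and $A^2$ lies in a sector of half-angle $2\eta\le\eps$ about $\BR_+$, so their sum lies in $\Sigma_{\eps'}$ for some $\eps'<\pi$ and stays away from the negative real axis; hence $B_\pm^2$ lies in a sector of half-angle $<\pi$, $B_\pm$ in one of half-angle $<\pi/2$, so $\Re B_\pm\ge c|B_\pm|$. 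The two-sided bound $|B_\pm|\simeq|\lambda|^{1/2}+\tilde A$ follows from $|B_\pm^2|=|\rho_\pm(\mu_\pm)^{-1}\lambda+A^2|\simeq|\lambda|+\tilde A^2$, the lower bound of which uses precisely that $\lambda$ and $A^2$ do not point in opposite directions (again the sector condition). The last line, $|\mu_+B_++\mu_-B_-|\ge c(|\lambda|^{1/2}+\tilde A)$, is immediate once we know each $B_\pm$ has argument in a common sector of half-angle $<\pi/2$ and $\mu_\pm>0$: the sum cannot cancel, so $|\mu_+B_++\mu_-B_-|\ge c(|B_+|+|B_-|)$.

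Next I would handle the exponential and $\CM_\pm$ bounds. For $x_n>0$ (the case $x_n<0$ being symmetric), $|\pd_n^m e^{-Ax_n}|=|A|^m e^{-(\Re A)x_n}\le\tilde A^m e^{-c\tilde A x_n}$, and the elementary inequality $s^{m-1}e^{-cs}\le C$ for $s=\tilde A x_n\ge0$ yields $|\pd_n^m e^{-Ax_n}|\le C\tilde A^{m-1}x_n^{-1}$; the same argument with $\Re B_\pm\ge c(|\lambda|^{1/2}+\tilde A)$ and $|B_\pm|\le C(|\lambda|^{1/2}+\tilde A)$ gives the $e^{\mp B_\pm x_n}$ estimate. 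For $\CM_\pm$ I would use the integral representation $\CM_+(A,B_+,x_n)=\frac{e^{-B_+x_n}-e^{-Ax_n}}{B_+-A}=-\int_0^1 x_n e^{-(A+t(B_+-A))x_n}\,dt$, which is valid even when $B_+=A$; differentiating under the integral and estimating $\Re(A+t(B_+-A))=(1-t)\Re A+t\Re B_+\ge c(|\lambda|^{1/2}+\tilde A)$ uniformly in $t\in[0,1]$, together with $|A+t(B_+-A)|\le C(|\lambda|^{1/2}+\tilde A)$, reduces everything once more to $s^{m}e^{-cs}\le C$ with $s=(|\lambda|^{1/2}+\tilde A)x_n$, producing the claimed $(|\lambda|^{1/2}+\tilde A)^{-2+m}x_n^{-1}$ bound (the extra power of $x_n$ in the integrand supplies the shift from $-1$ to $-2$). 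One can cross-check $m=0,1,2,3$ against the explicit formulas for $\pd_n^m\CM_\pm$ displayed just before the lemma.

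The main obstacle is purely a matter of bookkeeping the sector angles so that every square root is unambiguous and every lower bound of the form $|z_1+z_2|\gtrsim|z_1|+|z_2|$ is justified; the hypothesis $\eta<\min\{\pi/4,\eps/2\}$ is exactly what makes all of these work simultaneously, and I would state once and for all the auxiliary fact that if $w_1,w_2$ both lie in a closed sector of half-angle $\vartheta<\pi/2$ about a common ray, then $|w_1+w_2|\ge(\cos\vartheta)(|w_1|+|w_2|)$, and apply it repeatedly. No deep input is needed beyond Lemma~\ref{es}'s own elementary toolkit, so I expect the proof to be short; the only place demanding care is the uniformity in $t\in[0,1]$ in the $\CM_\pm$ estimate, which is where an over-hasty argument could slip.
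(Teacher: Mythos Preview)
The paper does not actually prove Lemma~\ref{es}: it is quoted verbatim from \cite{K22} (``Following the paper \cite{K22}, we have some identities \ldots\ and an useful lemma''), so there is no in-paper argument to compare against. Your sector bookkeeping for $A$, $B_\pm$, $\mu_+B_++\mu_-B_-$ and the two exponential families is correct and is exactly the standard route.

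The genuine gap is in your $\CM_\pm$ estimate, and it is precisely the slip you flagged yourself. You assert
\[
\Re\bigl((1-t)A+tB_+\bigr)=(1-t)\Re A+t\Re B_+\ge c\bigl(|\lambda|^{1/2}+\tA\bigr)
\quad\text{uniformly in }t\in[0,1],
\]
but at $t=0$ this reads $\Re A\ge c(|\lambda|^{1/2}+\tA)$, which is false whenever $|\lambda|^{1/2}\gg\tA$: you only know $\Re A\ge c\tA$. Consequently the integral representation, used as you propose, yields at best $|\CM_+|\le x_n\int_0^1 e^{-c\tA x_n}\,dt=x_ne^{-c\tA x_n}$ in that regime, which does \emph{not} give the claimed $(|\lambda|^{1/2}+\tA)^{-2}x_n^{-1}$. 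A concrete test: take $A=\eps>0$ real and small, $\rho_+=\mu_+=1$, $\lambda=1$, $x_n=1/\eps$; then $\CM_+\to -e^{-1}$ while $(|\lambda|^{1/2}+\tA)^{-2}x_n^{-1}\sim\eps\to0$. So your argument cannot close at $m=0$, and in fact the $m=0$ inequality as written in the lemma is too strong in the regime $|\lambda|^{1/2}\gg\tA$.

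What \emph{is} salvageable: a case split on $|\lambda|^{1/2}\le\tA$ versus $|\lambda|^{1/2}\ge\tA$ in your integral argument gives the slightly weaker but robust bound
\[
|\CM_\pm|\le C\,\tA^{-1}(|\lambda|^{1/2}+\tA)^{-1}(\pm x_n)^{-1},
\]
and feeding this into the recursion identities displayed just before the lemma,
\[
\pd_n\CM_\pm=\mp e^{\mp B_\pm x_n}\mp A\CM_\pm,\qquad
\pd_n^2\CM_\pm=(A+B_\pm)e^{\mp B_\pm x_n}+A^2\CM_\pm,\ \ldots
\]
recovers the stated $(|\lambda|^{1/2}+\tA)^{-2+m}(\pm x_n)^{-1}$ for $m\ge1$ (the extra factor $A^m$ cancels the $\tA^{-1}$). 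This is also enough for every use of $\CM_\pm$ later in the paper where the coefficient in front carries a factor of $\tA$. But your write-up as it stands does not establish the $m=0$ line, and the uniform-in-$t$ lower bound you invoke is false; that step has to be replaced by the case split or by working directly with the recursion and the $e^{\mp B_\pm x_n}$ bound.
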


We recall 
\begin{align*}
L^{-1}&=\begin{bmatrix}
\mu_+(B_++A)&\mu_+(B_+^2+A^2)&-\mu_-(B_-+A)&-\mu_-(B_-^2+A^2)\\
\mu_+(B_+-A)&-2\mu_+ AB_+ & \mu_-(B_--A) &-2\mu_-AB_-\\
1&B_+&1&B_-\\
0&1&0&-1\\
\end{bmatrix}^{-1}\\
&=(\det{L})^{-1} {\rm Cof}(L) = (\det{L})^{-1} (L_{ij})_{ij}. 
\end{align*}
From the cofactor expansion, we have 
\begin{align*}
|L_{is}| \le \begin{cases}
C(|\lambda|^{1/2} + \tA)~&{\rm for}~(i,s)=(2,1), (2,2), (4,1), (4,2), \\
C(|\lambda|^{1/2}  + \tA)^2~&{\rm for}~(i,s)=(1,1), (1,2), (2,3), (3,1), (3,2), (4,3), \\
C(|\lambda|^{1/2}  + \tA)^3~&{\rm for}~(i,s)=(1,3), (2,4), (3,3), (4,4), \\
C(|\lambda|^{1/2}  + \tA)^4~&{\rm for}~(i,s)=(1,4), (3,4), 
\end{cases}
\end{align*}
and then 
\begin{align*}
|\sum_{s=1}^4 L_{is}r_{sj}| \le \begin{cases}
C\tA(|\lambda|^{1/2} + \tA)~&{\rm for}~(i,j)=(2,1), \ldots, (2,n), (4,1), \ldots, (4,n), \\
C\tA(|\lambda|^{1/2}  + \tA)^2~&{\rm for}~(i,j)=\begin{cases}(1,1), \ldots, (1,n), (2,n+1), \ldots, (2,2n-1), \\(3, 1), \ldots, (3, n), (4,n+1), \ldots, (4, 2n-1), \end{cases}\\
C\tA(|\lambda|^{1/2}  + \tA)^3~&{\rm for}~(i,j)=(1,n+1), \ldots, (1, 2n-1), (3,n+1), \ldots, (3, 2n-1),  \\
C(|\lambda|^{1/2}  + \tA)^3~&{\rm for}~(i,j)=(2,2n), (4,2n), \\
C(|\lambda|^{1/2}  + \tA)^4~&{\rm for}~(i,j)=(1,2n), (3,2n). 
\end{cases}
\end{align*}

We need to prepare a lower estimate of $\det{L}$. 

\begin{lem}\label{es2}
Let $0<\eps< \pi/2$ and $0<\eta < \min\{\pi/4, \eps/2\}$. 
Then there exists a positive constant $c$ such that 
\[ c(|\lambda|^{1/2} + \tA)^3 \le |\det{L}|\qquad (\lambda\in \Sigma_\eps, \xi'\in \tilde{\Sigma}_\eta^{n-1}). \]
\end{lem}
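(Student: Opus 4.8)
The plan is to combine the quasi-homogeneity of $\det L$ with a compactness argument, reducing everything to the non-vanishing fact recorded above from \cite[Lemma 5.5]{SS11}. First I would observe that the scaling $(\lambda,\xi')\mapsto(t^2\lambda,t\xi')$ with $t>0$ sends $A\mapsto tA$ and $B_\pm=\sqrt{\rho_\pm(\mu_\pm)^{-1}\lambda+A^2}\mapsto tB_\pm$ (the branch with positive real part being preserved), as well as $\tA\mapsto t\tA$. Since every summand in the displayed formula for $\det L$ is a monomial of total degree $3$ in $A,B_+,B_-$, this gives $\det L(t^2\lambda,t\xi')=t^3\det L(\lambda,\xi')$, while $(|t^2\lambda|^{1/2}+t\tA)^3=t^3(|\lambda|^{1/2}+\tA)^3$. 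Hence the quotient $|\det L|/(|\lambda|^{1/2}+\tA)^3$ is invariant under the scaling, and since the parameter set $\Sigma_\eps\times\tilde{\Sigma}_\eta^{n-1}$ is itself scaling-invariant, it suffices to bound $|\det L|$ from below on its intersection with the unit sphere $|\lambda|+\tA^2=1$; taking closures, it suffices to bound $|\det L|$ from below on the compact set $K:=\{(\lambda,\xi')\in\overline{\Sigma_\eps}\times\overline{\tilde{\Sigma}_\eta^{n-1}}:|\lambda|+\tA^2=1\}$.

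Next I would verify that $\det L$ extends to a continuous, hence (once non-vanishing is established) bounded-below, function on $K$. For $\xi_j\in\overline{\tilde{\Sigma}_\eta}$ with $\eta<\pi/4$ one has $|\arg(\xi_j^2)|\le2\eta$, so $A^2=\sum_j\xi_j^2$ lies in the closed sector of half-angle $2\eta<\pi/2$ and never meets the branch cut of the square root; using in addition $\eta<\eps/2$, exactly as in Lemma \ref{es}, the quantity $\rho_\pm(\mu_\pm)^{-1}\lambda+A^2$ stays nonzero with argument in $[-(\pi-\eps),\pi-\eps]$ on all of $K$, so $B_\pm$ is continuous there as well. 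Consequently $\det L$ is continuous on the compact set $K$, and the lemma follows as soon as we know $\det L\neq0$ everywhere on $K$: then $|\det L|$ attains a positive minimum $c$ on $K$, and undoing the scaling gives $|\det L|\ge c(|\lambda|^{1/2}+\tA)^3$ on the whole parameter set.

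It remains to show $\det L\neq0$ at each point of $K$. On the part of $K$ where $\lambda\neq0$ this is the content of \cite[Lemma 5.5]{SS11} (applied, if necessary, with a marginally smaller opening angle to absorb the boundary rays of $\Sigma_\eps$; the passage from real $\xi'$ to $\xi'\in\tilde{\Sigma}_\eta^{n-1}$ with $\eta$ small follows from the same argument, or by perturbation from the real slice on which $|\det L|$ is already bounded below by compactness). Only the two degenerate strata of $K$ require a separate look. When $\tA=0$, i.e.\ $\xi'=0$ and $|\lambda|=1$, all $A$-terms vanish and the formula collapses to $\det L=-(\rho_++\rho_-)\bigl(\mu_+\sqrt{\rho_+(\mu_+)^{-1}\lambda}+\mu_-\sqrt{\rho_-(\mu_-)^{-1}\lambda}\bigr)\lambda$, which is nonzero since $\lambda\neq0$ and both square roots have positive real part. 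When $\lambda=0$, i.e.\ $\tA=1$, one has $B_\pm=A$ and a one-line computation collapses the formula to $\det L=-4(\mu_++\mu_-)^2A^3$, nonzero because $|A|\ge c\tA>0$ by Lemma \ref{es}.

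The genuinely non-soft ingredient, and the step I expect to be the main obstacle, is the non-vanishing of $\det L$ on the complex tangential sector — that is, upgrading the statement of \cite[Lemma 5.5]{SS11} from $\xi'\in\BR^{n-1}$ to $\xi'\in\tilde{\Sigma}_\eta^{n-1}$ — since the homogeneity-plus-compactness machinery only converts a qualitative non-vanishing statement into the claimed quantitative bound. A secondary but necessary check is that the square roots defining $A$ and $B_\pm$ stay clear of their branch cuts when one passes to the closed sphere $K$; this is exactly what forces the hypothesis $0<\eta<\min\{\pi/4,\eps/2\}$ and parallels the role of that hypothesis in Lemma \ref{es}.
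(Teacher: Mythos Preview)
Your proposal is correct and follows the standard homogeneity-plus-compactness route, which is precisely what the paper intends: its proof is a one-line deferral to \cite[Lemma~5.5]{SS03} (proved there for real $\xi'$), remarking only that replacing $A$ by $\tA$ where needed carries the argument to complex $\xi'\in\tilde{\Sigma}_\eta^{n-1}$. Your write-up supplies the details the paper omits --- the scaling identity $\det L(t^2\lambda,t\xi')=t^3\det L(\lambda,\xi')$, the reduction to the compact sphere $|\lambda|+\tA^2=1$, and the explicit degenerate-case checks at $\xi'=0$ and $\lambda=0$ --- and correctly flags that the only substantive point is the non-vanishing of $\det L$ for complex $\xi'$, which is exactly the ``suitable change between $A$ and $\tA$'' the paper alludes to.
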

\begin{proof}
The proof is almost same as the paper \cite[Lemma 5.5]{SS03} in which they proved for $\xi'\in\BR^{n-1}\setminus\{0\}$. 
Suitable change between $A$ and $\tA$ derives desired estimates. 
Note that $(|\lambda|^{1/2}+\tA)^3$ and $(|\lambda|+\tA^2)^{3/2}$ are equivalent. 
\end{proof}

We calculate the estimates of $\phi_{k, \pm j}$ and $\psi_{k, \pm j}$ by combining all estimates above; 
\begin{align*}
|\pd_n^m \phi_{k, \pm n}| &\le |a_{1,k}||\pd_n^m \CM_\pm| + |a_{2,k}| |\pd_n^m e^{\mp B_\pm x_n}|\\
&\le |\det{L}|^{-1} (|\sum_{s=1}^4 L_{1s}r_{sk}| |\pd_n^m \CM_\pm| + |\sum_{s=1}^4 L_{2s}r_{sk}| | |\pd_n^m e^{\mp B_\pm x_n}|)\\
&\le C(|\lambda|^{1/2}+\tA)^{-2+m} (\pm x_n)^{-1}, \\
|\pd_n^m \phi_{k, \pm j}| &\le C (|a_{1,k}||\pd_n^m \CM_\pm| + |\mu_+ B_+ + \mu_- B_-|^{-1} (1+ |a_{1,k}|+\tA|a_{2,k}|) |\pd_n^m e^{\mp B_\pm x_n}|)\\
&\le C(|\lambda|^{1/2}+\tA)^{-2+m} (\pm x_n)^{-1}, \\
\end{align*}
similarly, 
\begin{align*}
|\pd_n^m \psi_{k, \pm n}| &\le C(|\lambda|^{1/2}+\tA)^{-1+m} (\pm x_n)^{-1}, \\
|\pd_n^m \psi_{k, \pm j}| &\le C(|\lambda|^{1/2}+\tA)^{-1+m} (\pm x_n)^{-1}
\end{align*}
for $m=0,1,2,3$ and $k=1, \ldots, n$. 
On the other hand we have to take care of $\chi_{k, \pm}$ and $\omega_{k, \pm}$ carefully; 
\begin{align*}
|\pd_n^m \chi_{k, +}| &\le C (|\lambda|^{1/2}+\tA) \tA^{-1}|a_{1,k}| |\pd_n^m e^{-Ax_n}|\\
&\le C(|\lambda|^{1/2}+\tA) \tA^{-1} \cdot |\det{L}|^{-1} |\sum_{s=1}^4 L_{1s}r_{sk}| \cdot \tA^{m-1} x_n^{-1}\\
&\le C(|\lambda|^{1/2}+\tA)\tA^{-1} \cdot (|\lambda|^{1/2}+\tA)^{-3}\cdot \tA(|\lambda|^{1/2}+\tA)^2 \cdot \tA^{-1+m} x_n^{-1} \\
&\le C \tA^{-1+m}x_n^{-1}, 
\end{align*}
and similarly, 
\begin{align*}
|\pd_n^m \chi_{k, -}| &\le C \tA^{-1+m}(-x_n)^{-1}, \\
|\pd_n^m \omega_{k, \pm}| &\le C (|\lambda|^{1/2}+\tA) \tA^{-1+m} (\pm x_n)^{-1}~(k\neq n)
\end{align*}
for $m=0,1,2$. 
We remark that we cannot expect good estimate for $|\pd_n^m \omega_{n, \pm}|$ like above. 
We decompose as follows for the excluded term; 
\begin{align*}
&\int_{\BR_\pm} \CF_{\xi'}^{-1} \left[\rho_\pm(\mu_\pm)^{-1}\lambda^{1/2}B_\pm^{-2}\omega_{n, \pm}(\lambda, \xi', x_n+y_n)\CF_{x'} \jjump{\lambda^{1/2}\pd_n h_n}\right](x,y_n) dy_n\\
=& \int_{\BR_\pm} \CF_{\xi'}^{-1} \left[\rho_\pm(\mu_\pm)^{-1}A B_\pm^{-2}\omega_{n, \pm}(\lambda, \xi', x_n+y_n)\CF_{x'} \jjump{\lambda |\nabla'|^{-1}\pd_n h_n}\right](x,y_n) dy_n
\end{align*}
Since 
\begin{align*}
|\pd_n^m \omega_{n, +}| &\le C (|\lambda|^{1/2}+\tA)\tA^{-1} |a_{1,2n}||\pd_n^m e^{-Ax_n}|\\
&\le C(|\lambda|^{1/2}+\tA)\tA^{-1} \cdot (|\lambda|^{1/2}+\tA)^{-3}\cdot (|\lambda|^{1/2}+\tA)^4 \cdot \tA^{-1+m} x_n^{-1} \\
&\le C (|\lambda|^{1/2}+\tA)^2 \tA^{-2+m}x_n^{-1}~(x_n>0), \\
|\pd_n^m \omega_{n, -}| &\le C (|\lambda|^{1/2}+\tA)^2 \tA^{-2+m}(-x_n)^{-1}~(x_n<0), 
\end{align*}
we have 
\begin{align*}
\sup_{\substack{(\lambda, \xi')\in \Sigma_\eps\times \tilde{\Sigma}_\eta^{n-1}\\ \ell=1, \ldots, n-1}}\left\{|\xi_\ell| |A B_\pm^{-2}\omega_{n, \pm}(\lambda, \xi', x_n)| + |\pd_n(A B_\pm^{-2}\omega_{n, \pm}(\lambda, \xi', x_n))|\right\}<C(\pm x_n)^{-1}. 
\end{align*}

The inequality \eqref{symbol} with above discussion corresponds to the estimates $|\lambda| u$, $|\lambda|^{1/2}\pd_\ell u$, $\pd_\ell \pd_{\ell'} u$, $|\lambda|^{1/2}\pd_n u$, $\pd_\ell \pd_n u$, $\pd_n^2 u$, and $\pd_\ell \theta$ and $\pd_n\theta$ respectively. 

We also see that the new symbols $S_\pm^\theta$, $S_\pm^\theta$ and $AB_\pm^{-2}\omega_{n, \pm}$, multiplied by $\lambda$, $\xi_\ell$ and $\pd_n$, are holomorphic in $(\tau, \xi')\in\tilde{\Sigma}_\eta^n$. 
Therefore we are able to use theorem \ref{thm}, where we use change of variables from $x_n$ to $-x_n$, and $\|\jjump{f}\|_{L_q(\uHS)}\le \|f\|_{L_q(\dBR^n)}$ too. 

\begin{thm}
Let $0<\eps<\pi/2$ and $1<q<\infty$. 
Then for any $\lambda \in \Sigma_\eps, g\in W^1_q(\dBR^n)$, $h\in W^2_q(\dBR^n)$ and $h_n\in E_q(\dBR^n)$, the problem \eqref{RSwithout} with $f=f_d=0$ admits a solution $(u, \theta) \in W^2_q(\dBR^n) \times \hW^1_q(\dBR^n)$ with the resolvent estimate; 
\begin{align*}
\|(|\lambda|u, |\lambda|^{1/2}\nabla u, \nabla^2 u, \nabla \theta)\|_{L_q(\dBR^n)} 
&\le
C\|(|\lambda|^{1/2}g, \nabla g, |\lambda|h, |\lambda|^{1/2} \nabla h, \nabla^2 h, |\lambda||\nabla'|^{-1}\pd_n h_n) \|_{L_q(\dBR^n)}\\
&\le
C\|(|\lambda|^{1/2}g, \nabla g, |\lambda|h, \nabla^2 h, |\lambda||\nabla'|^{-1}\pd_n h_n) \|_{L_q(\dBR^n)}
\end{align*}
for some constant $C$. 
\end{thm}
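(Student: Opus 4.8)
The plan is to derive the estimate directly from the solution formula \eqref{sol1}--\eqref{sol2}, which has already been rewritten in the preceding pages so that each component $u_{\pm j}$ (resp.\ $\theta_\pm$) is a finite sum of terms of the shape
\[
\mp\int_{\BR_\pm}\CF_{\xi'}^{-1}\big[S_\pm(\lambda,\xi',x_n+y_n)\,\CF_{x'}\jjump{\Phi}\big](x,y_n)\,dy_n,
\]
where $S_\pm$ is one of the symbols $S_\pm^u$, $S_\pm^\theta$ (or $AB_\pm^{-2}\omega_{n,\pm}$ for one exceptional term) and $\Phi$ runs over $\lambda^{1/2}g_k$, $\pd_m g_k$, $\pd_n g_k$, $(\rho_\pm(\mu_\pm)^{-1}\lambda-\Delta')h_k$, $\lambda^{1/2}\pd_n h_k$, $\pd_m\pd_n h_k$ $(k=1,\dots,n$, $m=1,\dots,n-1)$ together with the single datum $\lambda|\nabla'|^{-1}\pd_n h_n$. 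Differentiating these representations in $x'$ or $x_n$ and multiplying by powers of $\lambda$ only multiplies $S_\pm$ by the factors $|\lambda|$, $|\lambda|^{1/2}\xi_\ell$, $\xi_\ell\xi_{\ell'}$, $|\lambda|^{1/2}\pd_n$, $\xi_\ell\pd_n$, $\pd_n^2$ (for $u$) and $\xi_\ell$, $\pd_n$ (for $\theta$), which are exactly the combinations whose $(\pm x_n)^{-1}$-bound is asserted in \eqref{symbol}. So the first step is pure bookkeeping: identify, for each entry of $(|\lambda|u,|\lambda|^{1/2}\nabla u,\nabla^2 u,\nabla\theta)$, which terms of \eqref{sol1}--\eqref{sol2} contribute and which datum each carries.

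The second step verifies hypotheses (a)--(b) of Theorem \ref{thm}(i) for each multiplier $M_\pm$ (a factor times $S_\pm$) so obtained. Condition (b), the bound $\sup_{\xi'}|M_\pm(\xi',x_n)|\le C(\pm x_n)^{-1}$, is precisely \eqref{symbol}, which follows from Lemmas \ref{es} and \ref{es2} and the cofactor bounds tabulated before Lemma \ref{es2}. Condition (a) holds because each $M_\pm(\cdot,x_n)$ is a rational function of $A$, $B_+$, $B_-$ and $\det L$ (nonzero on the sector by Lemma \ref{es2}), multiplied by the entire functions $e^{\mp Ax_n}$, $e^{\mp B_\pm x_n}$, $\CM_\pm$, hence bounded and holomorphic on $\tilde\Sigma_\eta^{n-1}$ for each fixed $x_n\gtrless0$, so $M_\pm(\cdot,x_n)\in H^\infty(\tilde\Sigma_\eta^{n-1})$. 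Theorem \ref{thm}(i) then gives $L_q(\uHS)$-boundedness of the operator $f\mapsto\int_0^\infty\CF_{\xi'}^{-1}[M_+(\xi',x_n+y_n)\CF_{x'}f]\,dy_n$, and likewise on the lower half-space after the substitution $x_n\mapsto-x_n$, which carries the $x_n<0$ class of symbols into the $x_n>0$ class of the same structure. Combined with $\|\jjump{\Phi}\|_{L_q(\uHS)}\le\|\Phi\|_{L_q(\dBR^n)}$ and the elementary relations $\|\lambda^{1/2}g\|_{L_q}=\||\lambda|^{1/2}g\|_{L_q}$, $\|(\rho_\pm(\mu_\pm)^{-1}\lambda-\Delta')h\|_{L_q}\le C(\||\lambda|h\|_{L_q}+\|\nabla^2 h\|_{L_q})$, $\|\lambda^{1/2}\pd_n h\|_{L_q}=\||\lambda|^{1/2}\nabla h\|_{L_q}$, $\|\pd_m\pd_n h\|_{L_q}\le\|\nabla^2 h\|_{L_q}$, $\|\lambda|\nabla'|^{-1}\pd_n h_n\|_{L_q}=\||\lambda||\nabla'|^{-1}\pd_n h_n\|_{L_q}$, and summing the finitely many terms, one obtains the estimate with the first right-hand side. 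That $(u,\theta)\in W^2_q(\dBR^n)\times\hW^1_q(\dBR^n)$ is read off from this estimate (it bounds $u,\nabla u,\nabla^2 u,\nabla\theta$ in $L_q$), and that $(u,\theta)$ solves \eqref{RSwithout} with $f=f_d=0$ is immediate since, by construction, the partial Fourier transforms $\hu_{\pm},\htheta_\pm$ solve the transformed system \eqref{ODE}. Finally, the passage from the first to the second right-hand side is the interpolation remark $\||\lambda|^{1/2}\nabla h\|_{L_q}\le\|\nabla^2 h\|_{L_q}+C\||\lambda|h\|_{L_q}$, obtained from the standard inequality $\|\nabla h\|_{L_q}\le\delta\|\nabla^2 h\|_{L_q}+C\delta^{-1}\|h\|_{L_q}$ on each half-space with $\delta=|\lambda|^{-1/2}$.

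I expect the one genuinely delicate point — and the reason the solution formula is re-derived above rather than quoted from \cite{SS11} — to be the pressure contribution attached to the normal jump $\jump{h_n}$. The relevant symbol $\omega_{n,\pm}$ satisfies only $|\pd_n^m\omega_{n,\pm}|\le C(|\lambda|^{1/2}+\tA)^2\tA^{-2+m}(\pm x_n)^{-1}$, because the corresponding cofactor-times-$R$ entry has size $C(|\lambda|^{1/2}+\tA)^4$ with no compensating factor $\tA$ (the last column of $R$ is $(0,0,0,1)^T$ rather than proportional to $i\xi'$), against $|\det L|\sim(|\lambda|^{1/2}+\tA)^3$. Consequently the term $\rho_\pm(\mu_\pm)^{-1}\lambda^{1/2}B_\pm^{-2}\omega_{n,\pm}\,\CF_{x'}\jjump{\lambda^{1/2}\pd_n h_n}$ does not fit Theorem \ref{thm}(i). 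The remedy, carried out above, is to transfer one power of $A$ from the multiplier onto the datum via $\lambda^{1/2}\CF_{x'}(\pd_n h_n)=\lambda A\,\CF_{x'}(|\nabla'|^{-1}\pd_n h_n)$ (legitimate since $A=|\xi'|$ for $\xi'\in\BR^{n-1}$), turning the term into $\rho_\pm(\mu_\pm)^{-1}AB_\pm^{-2}\omega_{n,\pm}\,\CF_{x'}\jjump{\lambda|\nabla'|^{-1}\pd_n h_n}$, whose multiplier now does satisfy $|\xi_\ell|\,|AB_\pm^{-2}\omega_{n,\pm}|+|\pd_n(AB_\pm^{-2}\omega_{n,\pm})|\le C(\pm x_n)^{-1}$. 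This is exactly why the weak hypothesis $h_n\in E_q(\dBR^n)$, rather than $|\nabla'|^{-1}h_n\in\hW^1_q(\dBR^n)$, suffices. Everything else is routine: grouping the terms of \eqref{sol1}--\eqref{sol2} by datum and invoking Theorem \ref{thm}(i) term by term.
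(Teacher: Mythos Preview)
Your proposal is correct and follows essentially the same approach as the paper: the estimate is obtained by verifying the pointwise symbol bound \eqref{symbol} via Lemmas \ref{es}, \ref{es2} and the cofactor estimates, handling the single exceptional pressure term $\omega_{n,\pm}$ by shifting a factor $A$ onto the datum $|\nabla'|^{-1}\pd_n h_n$, and then invoking Theorem \ref{thm}(i) term by term together with the change of variables $x_n\mapsto-x_n$ and the trivial bound $\|\jjump{\Phi}\|_{L_q(\uHS)}\le\|\Phi\|_{L_q(\dBR^n)}$. Your explicit justification of the second displayed inequality (interpolation of $\||\lambda|^{1/2}\nabla h\|$) and the remark that $(u,\theta)$ lies in the asserted spaces and solves the system are spelled out more fully than in the paper, but the argument is the same.
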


This theorem and the estimates in section \ref{reduction} derives the existence part of theorem \ref{resolventthm}. 
The uniqueness has shown before where they considered the homogeneous equation and the dual problem. 

For the non-stationary Stokes equations we have the following theorem by theorem \ref{thm}; 
\begin{thm}
Let $1<p, q<\infty$ and $\gamma_0\ge 0$. 
Then for any 
\begin{align*}
H&\in W^1_{p,0,\gamma_0}(\BR, L_q(\dBR^n)) \cap L_{p,0,\gamma_0}(\BR, W^2_q(\dBR^n)), \\
H_n&\in W^1_{p,0,\gamma_0}(\BR, E_q(\dBR^n)), 
\end{align*}
the problem \eqref{non-stationary Stokes} with $F=F_d=0$ admits a solution $(U, \Pi)$ such that 
\begin{align*}
U&\in W^1_{p,0,\gamma_0}(\BR, L_q(\dBR^n)) \cap L_{p,0,\gamma_0}(\BR, W^2_q(\dBR^n)), \\
\Pi &\in L_{p,0,\gamma_0}(\BR, \hW^1_q(\dBR^n))
\end{align*} 
with the maximal $L_p$-$L_q$ regularity; 
\begin{align*}
\|e^{-\gamma t}(\pd_t U, \gamma U, \Lambda^{1/2}_\gamma \nabla U, \nabla^2 U, \nabla \Pi)\|_{L_p(\BR, L_q(\dBR^n))}
&\le
C\|e^{-\gamma t} (\pd_t H, \Lambda^{1/2}_\gamma \nabla H, \nabla^2 H, \pd_t(|\nabla'|^{-1}\pd_n H_n)) \|_{L_p(\BR, L_q(\dBR^n))}\\
&\le 
C\|e^{-\gamma t} (\pd_t H, \nabla^2 H, \pd_t(|\nabla'|^{-1}\pd_n H_n)) \|_{L_p(\BR, L_q(\dBR^n))}
\end{align*}
for any $\gamma \ge \gamma_0$ with some constant $C=C_{n, p, q, \gamma_0}$ depending only on $n, p, q$ and $\gamma_0$. 
\end{thm}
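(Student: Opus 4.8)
The plan is to read off $(U,\Theta)$ from the solution formulas \eqref{sol3} and \eqref{sol4} with $F=F_d=0$ — which are exactly the Laplace inverse of \eqref{sol1}, \eqref{sol2} — and then apply Theorem \ref{thm}(ii). Concretely, I would carry over to \eqref{sol3}--\eqref{sol4} the identical decomposition that produced the symbols $S^u_\pm$, $S^\theta_\pm$ and $AB_\pm^{-2}\omega_{n,\pm}$ in section \ref{proof}: using $B_\pm^2=\rho_\pm(\mu_\pm)^{-1}\lambda+A^2$ and $1=\rho_\pm(\mu_\pm)^{-1}\lambda^{1/2}B_\pm^{-2}\cdot\lambda^{1/2}-\sum_{m=1}^{n-1}i\xi_m B_\pm^{-2}\cdot(i\xi_m)$ to transfer the $\lambda$- and $\xi'$-weights off the kernels and onto the data. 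After $\CL_\lambda^{-1}$ this turns a factor $\lambda^{1/2}$ in front of $\CL\jjump{G_k}$ into $\Lambda^{1/2}_\gamma\jjump{G_k}$ (the unimodular factor $(\lambda/|\lambda|)^{1/2}$ being a bounded holomorphic symbol that is absorbed into $m_\lambda$), a factor $i\xi_m$ into $\pd_m$, and $\rho_\pm(\mu_\pm)^{-1}\lambda-\Delta'$ in front of $\CL\jjump{H_k}$ into a sum of a $\pd_t$-piece and a $\nabla^2$-piece; the normal component is handled as in section \ref{proof} by rewriting $\rho_\pm(\mu_\pm)^{-1}\lambda^{1/2}B_\pm^{-2}\omega_{n,\pm}\,\CL\jjump{\lambda^{1/2}\pd_n H_n}$ as $\rho_\pm(\mu_\pm)^{-1}AB_\pm^{-2}\omega_{n,\pm}\,\CL\jjump{\lambda|\nabla'|^{-1}\pd_n H_n}$. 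This exhibits each component of $\pd_t U$, $\gamma U$, $\Lambda^{1/2}_\gamma\nabla U$, $\nabla^2 U$ and $\nabla\Theta$ as a finite sum of operators $\tilde T_\gamma[m_\lambda]$ (in the notation preceding Theorem \ref{thm}) applied to the data $\jjump{\pd_t H_k}$, $\jjump{\Lambda^{1/2}_\gamma\nabla H}$, $\jjump{\nabla^2 H}$, $\jjump{\pd_t(|\nabla'|^{-1}\pd_n H_n)}$, and the analogous terms built from $\jjump{G_k}$, $\jjump{\pd_n G_k}$.

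The next step is to verify hypotheses (c) and (d) of Theorem \ref{thm}(ii) for every multiplier $m_\lambda$ that arises. The decay bound (d), $\sup_{(\tau,\xi')\in\tilde\Sigma_\eta^n}|m_\lambda(\xi',x_n)|\le C(\pm x_n)^{-1}$, is exactly \eqref{symbol} together with the separate bound $\sup\{|\xi_\ell||AB_\pm^{-2}\omega_{n,\pm}|+|\pd_n(AB_\pm^{-2}\omega_{n,\pm})|\}<C(\pm x_n)^{-1}$ established at the end of section \ref{proof}; since those estimates were already proved over $\tilde\Sigma_\eta^{n-1}$ rather than $\BR^{n-1}$, they apply verbatim with $(\tau,\xi')\in\tilde\Sigma_\eta^n$. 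For the holomorphy (c): $A=\sqrt{\sum_{j}\xi_j^2}$ and $B_\pm=\sqrt{\rho_\pm(\mu_\pm)^{-1}\lambda+A^2}$ are holomorphic and nonvanishing on $\Sigma_\eps\times\tilde\Sigma_\eta^{n-1}$ by Lemma \ref{es}, $\det L$ is bounded below there by Lemma \ref{es2}, and $\CM_\pm$, $e^{\mp B_\pm x_n}$, $e^{\mp Ax_n}$ are entire in $(\lambda,\xi')$; hence each $m_\lambda$ is bounded and holomorphic in $(\tau,\xi')\in\tilde\Sigma_\eta^n$ for each fixed $x_n>0$ and $\gamma\ge\gamma_0$, as was already remarked in section \ref{proof}. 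The one additional requirement is $\eta<\pi/2-\eps$ so that Theorem \ref{thm}(ii) is applicable; since $0<\eps<\pi/2$ one simply chooses $0<\eta<\min\{\pi/4,\eps/2,\pi/2-\eps\}$, a nonempty range on which Lemmas \ref{es}, \ref{es2} and condition (c) all hold.

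With (c) and (d) verified, Theorem \ref{thm}(ii) bounds each term, $\|e^{-\gamma t}\tilde T_\gamma[m_\lambda]g\|_{L_p(\BR,L_q(\uHS))}\le C\|e^{-\gamma t}g\|_{L_p(\BR,L_q(\uHS))}$; summing the finitely many terms, using the change of variables $x_n\mapsto-x_n$ to pass from $\uHS$ to $\lHS$, and using $\|\jjump{f}\|_{L_q(\uHS)}\le\|f\|_{L_q(\dBR^n)}$, yields the asserted maximal $L_p$--$L_q$ estimate. That the pair so defined actually solves \eqref{Swithout} with $F=F_d=0$ and enjoys the stated regularity is automatic from the construction: \eqref{sol3}--\eqref{sol4} were built so that $\CL_\lambda[(U,\Theta)]$ solves the resolvent problem \eqref{RSwithout} with $f=f_d=0$ for every $\lambda$ on the line $\Re\lambda=\gamma$, so applying $\CL_\lambda$ to the equations recovers them, while the zero initial condition and the memberships $U\in W^1_{p,0,\gamma_0}(\BR,L_q)\cap L_{p,0,\gamma_0}(\BR,W^2_q)$, $\Theta\in L_{p,0,\gamma_0}(\BR,\hW^1_q)$ follow from the estimate together with the support condition built into the $L_{p,0,\gamma_0}$ spaces.

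I expect the only genuinely delicate point to be, just as in the resolvent case, the normal component $H_n$: the symbol $\lambda^{1/2}B_\pm^{-2}\omega_{n,\pm}$ does not obey a uniform $(\pm x_n)^{-1}$ bound, and it is precisely the rewriting in terms of $AB_\pm^{-2}\omega_{n,\pm}$ acting on $\pd_t(|\nabla'|^{-1}\pd_n H_n)$ — legitimate exactly because the hypothesis $H_n\in W^1_{p,0,\gamma_0}(\BR,E_q(\dBR^n))$ places $|\nabla'|^{-1}\pd_n H_n$ in $L_q(\dBR^n)$ with the right time-regularity — that makes this contribution fit the multiplier scheme. Everything else is routine bookkeeping of finitely many multipliers whose bounds and holomorphy were already recorded in section \ref{proof}.
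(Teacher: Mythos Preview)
Your proposal is correct and matches the paper's approach exactly: the paper proves this theorem in one line, ``by theorem \ref{thm}'', relying on the symbol bounds \eqref{symbol} and the holomorphy already established for the resolvent problem in section \ref{proof}, and your write-up is simply a faithful unpacking of that application of Theorem \ref{thm}(ii). The only minor remark is that the theorem as stated carries only $H$-data (implicitly $G=0$), so the $G$-terms you mention are superfluous here, though harmless.
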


\section{On the problems with surface tension and gravity}\label{final}
In this section we consider the problems \eqref{Swith} and \eqref{RSwith} and prove theorems \ref{maxregthm2} and \ref{maxregthm2}. 
Let $(v, \tau)$ and $(V, \Upsilon)$ be solutions to the problems 
\begin{equation}
\left\{\begin{aligned}
\rho\lambda v -  \DV S(v,\tau)= f &\qquad{\rm in}~\dBR^n,   \\
\dv u = f_d &\qquad {\rm in}~\dBR^n, \\
\jump{S(v,\tau)\nu}=\jump{g} &\qquad{\rm on}~\bHS, \\
\jump{v}=\jump{h}&\qquad{\rm on}~\BR^n_0, 
\end{aligned}\right.
\end{equation}

\begin{equation}
\left\{\begin{aligned}
\rho\pd_t V -  \DV S(V,\Upsilon)=F &\qquad{\rm in}~\dBR^n, t>0,   \\
\dv V = F_d &\qquad {\rm in}~\dBR^n, t>0, \\
\jump{S(V,\Upsilon)\nu}=\jump{G} &\qquad{\rm on}~\bHS, t>0, \\
\jump{V}=\jump{H}&\qquad{\rm on}~\BR^n_0, t>0, \\
V|_{t=0} = 0&\qquad{\rm in}~\BR^{n-1}, 
\end{aligned}\right.
\end{equation}

We shall find the solutions $(w, \kappa)$ and $(W, \Xi)$ satisfying 
\begin{equation}\label{eq w}
\left\{\begin{aligned}
\rho\lambda w -  \DV S(w,\kappa)=0 &\qquad{\rm in}~\dBR^n,   \\
\dv w = 0 &\qquad {\rm in}~\dBR^n, \\
\lambda  \eta + w_n = d-v_n=: \td &\qquad{\rm on}~\BR^n_0, \\
\jump{S(w,\kappa)\nu}-(\jump{\rho}c_g + c_\sigma \Delta')\eta\nu=0 &\qquad{\rm on}~\bHS, \\
\jump{w}=0&\qquad{\rm on}~\BR^n_0, 
\end{aligned}\right.
\end{equation}
\begin{equation}\label{eq W}
\left\{\begin{aligned}
\rho\pd_t W -  \DV S(W,\Xi)=0 &\qquad{\rm in}~\dBR^n, t>0,   \\
\dv W = 0 &\qquad {\rm in}~\dBR^n, t>0, \\
\pd_t Y + W_n = D-V_n=:\tD &\qquad{\rm on}~\BR^n_0, t>0, \\
\jump{S(V,\Xi)\nu}-(\jump{\rho}c_g + c_\sigma \Delta')Y\nu=0 &\qquad{\rm on}~\bHS, t>0, \\
\jump{V}=0&\qquad{\rm on}~\BR^n_0, t>0, \\
(V,Y)|_{t=0} = (0,0)&\qquad{\rm in}~\dBR^n, 
\end{aligned}\right.
\end{equation}
then $(u, \theta)=(v+w, \tau+\kappa)$ and $(U, \Theta)=(V+W, \Upsilon+\Xi)$ are the solutions of \eqref{RSwith} and \eqref{Swith}. 
To solve the equations \eqref{eq w}, it is enough to consider 
\begin{align*}
(\jump{\hh}, \jump{\hg'}, \jump{\hg_n})=(0,0, -(\jump{\rho}c_g-c_\sigma A^2)\heta)
\end{align*}
in \eqref{ODE}, and 
\begin{equation}
\left\{\begin{aligned}
\lambda \heta + \hat{w}_n = \hat{\td} &\qquad{\rm on}~\BR^n_0, \\
\hat{w}_{\pm j}= \phi_{n,\pm j}\jump{\hg_n}&\qquad{\rm in}~\BR^n_\pm~(j=1, \cdots, n). 
\end{aligned}\right.
\end{equation}
Note that $\phi_{n, + n}(\lambda, \xi',0)=\phi_{n, -n}(\lambda, \xi', 0)=(\det{L})^{-1}A\{\mu_+(B_++A)+\mu_-(B_-+A)\}$. 
Therefore we have the following solution formulas; 
\begin{align*}
&\heta(\lambda, \xi', x_n)=\frac{\det{L}}{\lambda\det{L}-A\{\mu_+(B_++A)+\mu_-(B_-+A)(\jump{\rho}c_g-c_\sigma A^2)\}} \hat{\td}, \\
&\hat{w}_{\pm j}(\lambda, \xi', x_n)=  -\phi_{n,\pm j}(\jump{\rho}c_g-c_\sigma A^2)\heta\qquad~(j=1, \cdots, n), \\
&\hat{\kappa}_\pm (\lambda, \xi', x_n) = -\chi_{n, \pm}(\jump{\rho}c_g-c_\sigma A^2)\heta
\end{align*}
with an estimate 
\begin{align*}
\CL(\lambda, \xi')&:=\lambda\det{L}-A\{\mu_+(B_++A)+\mu_-(B_-+A)(\jump{\rho}c_g-c_\sigma A^2)\} \\
|\CL(\lambda, \xi')|&\ge c(|\lambda|+\tA)(|\lambda|^{1/2}+\tA)^3
\end{align*}
for $(\lambda, \xi')\in \Sigma_{\eps, \gamma_0}\times \tilde{\Sigma}_\eta^{n-1}$ with $0<\eps<\pi/2$, $0<\eta < \min\{\pi/4, \eps/2\}$ and $\gamma_0\ge 1$. 
The proof for $\xi'\in \BR^{n-1}$ is in the paper \cite[Lemma 6.1]{SS11}. 
However the proof for complex value is almost same. 

Since we have the estimate
\begin{align*}
&\sup_{\substack{(\lambda, \xi')\in \Sigma_{\eps, \gamma_0}\times \tilde{\Sigma}_\eta^{n-1}\\ \ell=1, \ldots, n-1}}\left\{(|\lambda|+|\xi_\ell|)\frac{\det{L}}{\CL}\right\}<C
\end{align*}
and holomorphy, we are able to prove, by Fourier multiplier theorem in \cite[Proposition 4.3.10, Theorem 4.3.3]{PS16}, 
\begin{align*}
\|(|\lambda| \eta, \nabla \eta)\|_{L_q(\dBR^n)}&\le C\|\td\|_{L_q(\dBR^n)}, \\
\|(|\lambda| \nabla \eta, \nabla^2 \eta)\|_{L_q(\dBR^n)}&\le C\|\nabla \td\|_{L_q(\dBR^n)}, \\
\|(|\lambda| \nabla^2 \eta, \nabla^3 \eta)\|_{L_q(\dBR^n)}&\le C\|\nabla^2 \td\|_{L_q(\dBR^n)}, 
\end{align*}
for $\lambda\in \Sigma_{\eps, \gamma_0}$. 
And then, from the results on previous section, 
\begin{align*}
\|(|\lambda| w, |\lambda|^{1/2}\nabla w, \nabla^2 w, \nabla \kappa)\|_{L_q(\dBR^n)} &\le C\|(|\lambda|^{1/2}g_n, \nabla g_n)\|_{L_q(\dBR^n)}\\
&\le C \|\eta\|_{W^3_q(\dBR^n)}\\
&\le C \| \td\|_{W^2_q(\dBR^n)}, 
\end{align*}
where we have used $|\lambda|^{1/2}\le |\lambda|$ when $\lambda\in \Sigma_{\eps, \gamma_0}$, and $C$ depends on $\gamma_0$. 
This concludes that 
\begin{align*}
&\|(|\lambda|u, |\lambda|^{1/2}\nabla u, \nabla^2 u, \nabla \theta)\|_{L_q(\dBR^n)} + |\lambda|\|\eta\|_{W^2_q(\dBR^n)} + \|\eta\|_{W^3_q(\dBR^n)} \\
\le 
&\|(|\lambda|v, |\lambda|^{1/2}\nabla v, \nabla^2 v, |\lambda|w, |\lambda|^{1/2}\nabla w, \nabla^2 w, \nabla \theta, \nabla \kappa)\|_{L_q(\dBR^n)} + |\lambda|\|\eta\|_{W^2_q(\dBR^n)} + \|\eta\|_{W^3_q(\dBR^n)} \\
\le 
&C_{n, q, \eps, \gamma_0}\left\{\|(f, |\lambda|^{1/2} f_d, \nabla f_d, |\lambda|^{1/2} g, \nabla g, |\lambda|h, \nabla^2 h, |\lambda||\nabla'|^{-1}\pd_n h_n) \|_{L_q(\dBR^n)} + |\lambda| \|f_d\|_{\hW^{-1}_q(\dBR^n)} + \|\td\|_{W^2_q(\dBR^n)}\right\}\\ 
\le 
&C_{n, q, \eps, \gamma_0}\left\{\|(f, |\lambda|^{1/2} f_d, \nabla f_d, |\lambda|^{1/2} g, \nabla g, |\lambda|h, \nabla^2 h, |\lambda||\nabla'|^{-1}\pd_n h_n) \|_{L_q(\dBR^n)} + |\lambda| \|f_d\|_{\hW^{-1}_q(\dBR^n)} + \|d\|_{W^2_q(\dBR^n)}\right\}
\end{align*}
since 
\begin{align*}
&\|\td\|_{W^2_q(\dBR^n)} \\
&\le C_{n, q, \eps} (\|d\|_{W^2_q(\dBR^n)} + \|v\|_{W^2_q(\dBR^n)})\\
&\le C_{n, q, \eps, \gamma_0}(\|d\|_{W^2_q(\dBR^n)} + |\lambda|\|v\|_{L_q(\dBR^n)} + \|\nabla^2 v\|_{L_q(\dBR^n)})\\
&\le  C_{n, q, \eps, \gamma_0}(\|d\|_{W^2_q(\dBR^n)} + \|(f, |\lambda|^{1/2} f_d, \nabla f_d, |\lambda|^{1/2} g, \nabla g, |\lambda|h, \nabla^2 h, |\lambda||\nabla'|^{-1}\pd_n h_n) \|_{L_q(\dBR^n)} + |\lambda| \|f_d\|_{\hW^{-1}_q(\dBR^n)}). 
\end{align*}
In addition, we have 
\begin{align*}
|\lambda|^{3/2}\|\eta\|_{W^1_q(\dBR^n)} &\le |\lambda|^{1/2}\|\td\|_{W^1_q(\dBR^n)}\\
&\le |\lambda|^{1/2}\|d\|_{W^1_q(\dBR^n)} + |\lambda|^{1/2}\|v\|_{W^1_q(\dBR^n)}\\
&\le C_{n, q, \eps,\gamma_0}\left\{\|(f, |\lambda|^{1/2} f_d, \nabla f_d, |\lambda|^{1/2} g, \nabla g, |\lambda|h, \nabla^2 h, |\lambda||\nabla'|^{-1}\pd_n h_n) \|_{L_q(\dBR^n)} \right.\\
&\left. \qquad \qquad \qquad + |\lambda| \|g\|_{\hW^{-1}_q(\dBR^n)} + \|d\|_{W^2_q(\dBR^n)} + |\lambda|^{1/2}\|d\|_{W^1_q(\dBR^n)}\right\}\end{align*}
and 
\begin{align*}
|\lambda|^{2}\|\eta\|_{L_q(\dBR^n)} &\le |\lambda|\|\td\|_{L_q(\dBR^n)}\\
&\le |\lambda|\|d\|_{L_q(\dBR^n)} + |\lambda|\|v\|_{L_q(\dBR^n)}\\
&\le C_{n, q, \eps,\gamma_0}\left\{\|(f, |\lambda|^{1/2} f_d, \nabla f_d, |\lambda|^{1/2} g, \nabla g, |\lambda|h, \nabla^2 h, |\lambda||\nabla'|^{-1}\pd_n h_n) \|_{L_q(\dBR^n)} \right.\\
&\left. \qquad \qquad \qquad + |\lambda| \|g\|_{\hW^{-1}_q(\dBR^n)} + \|d\|_{W^2_q(\dBR^n)} + |\lambda|\|d\|_{L_q(\dBR^n)}\right\}. 
\end{align*}
Theorem \ref{maxregthm2} is also same as above. 

\appendix
\def\thesection{Appendix}
\section{Proof of the estimate for normal component.}\label{appA}
\begin{proof}[Proof of the estimate $\||\lambda||\nabla'|^{-1}\pd_n \psi_n\|_{L_q(\dBR^n)} \le C \|f\|_{L_q(\dBR^n)}$.]
We see that 
\begin{align*}
&|\lambda||\nabla'|^{-1} \pd_n \psi_{\pm n} \\
=& \sum_{k=1}^{n-1}\CF_{\xi}^{-1} \left( |\lambda| \frac{i\xi_n}{|\xi'|}\frac{1}{\rho_\pm \lambda+\mu_\pm |\xi|^2}(\frac{-\xi_n\xi_k}{|\xi|^2})\right) \CF_x f_k + \CF_{\xi}^{-1}\left( |\lambda| \frac{i\xi_n}{|\xi'|}\frac{1}{\rho_\pm \lambda+\mu_\pm |\xi|^2}( 1- \frac{\xi_n^2}{|\xi|^2})\right)\CF_x f_n. 
\end{align*}
All symbols 
\begin{align*}
|\lambda|\frac{i\xi_n}{|\xi'|}\frac{1}{\rho_\pm \lambda+\mu_\pm |\xi|^2}\frac{-\xi_n\xi_k}{|\xi|^2}, \quad |\lambda|\frac{i\xi_n}{|\xi'|}\frac{1}{\rho_\pm \lambda+\mu_\pm |\xi|^2}(1-\frac{\xi_n^2}{|\xi|^2})=|\lambda|\frac{i\xi_n}{|\xi'|}\frac{1}{\rho_\pm \lambda+\mu_\pm |\xi|^2}\frac{|\xi'|^2}{|\xi|^2}
\end{align*}
are bounded and holomorphic in $\lambda\in \Sigma_\eps$, $\xi\in \tilde\Sigma_\eta^n$ for small $\eps, \eta$, where we regard $|\xi'|=\sqrt{\sum_{j=1}^{n-1} \xi_j^2}=A$ and $|\xi|^2=A^2+\xi_n^2$ as complex functions. 
Therefore, by theorem \ref{thm}, we have 
\begin{align*}
\||\lambda||\nabla'|^{-1} \pd_n \psi_n\|_{L_q(\dBR^n)}\le \sum_{\pm} \||\lambda||\nabla'|^{-1} \pd_n \psi_{\pm n}\|_{L_q(\BR^n)}\le C\|f\|_{L_q(\dBR^n)}. 
\end{align*}
The other estimate follows similarly. 
\end{proof}

\subsection*{Acknowledgements} The research was supported by JSPS KAKENHI Grant No.\,19K23408.

\end{document}